\newcommand{\C}{\mathbb{C}}
\newcommand{\R}{\mathbb{R}}
\newcommand{\N}{\mathbb{N}}
\newcommand{\tn}{\textnormal}
\theoremstyle{plain}
\newtheorem{theorem}{Theorem}[section]
\newtheorem{corollary}[theorem]{Corollary}
\newtheorem{proposition}[theorem]{Proposition}
\newtheorem{lemma}[theorem]{Lemma}
\theoremstyle{definition}
\newtheorem{defn}[theorem]{Definition}
\theoremstyle{definition}
\theoremstyle{definition}
\theoremstyle{definition}
\theoremstyle{definition}
\newtheorem{question}{Question}
\title{Enumerative Aspects of Nullstellensatz Certificates}
\author{Bart Sevenster${}^\dagger$, Jacob Turner${}^\dagger$}
\thanks{${}^\dagger$ Korteweg-de Vries Institute for Mathematics, University of Amsterdam, 1098 XG Amsterdam, Netherlands.}
\begin{document}
\begin{abstract}
 Using polynomial equations to model combinatorial problems has been a popular tool both in computational combinatorics as well as an approach to proving new theorems. In this paper, we look at several combinatorics problems modeled by systems of polynomial equations satisfying special properties. If the equations are infeasible, Hilbert's Nullstellensatz gives a certificate of this fact. These certificates have been studied and exhibit combinatorial meaning. In this paper, we generalize some known results and show that the Nullstellensatz certificate can be viewed as enumerating combinatorial structures. As such, Gr\"obner basis algorithms for solving these decision problems may implicitly be solving the enumeration problem as well.
\end{abstract}
\maketitle
{Keywords: Hilbert's Nullstellensatz, Polynomial Method, Enumerative Combinatorics, Algorithmic Combinatorics}
\section{Introduction}

Polynomials and combinatorics have a long common history. Early in both the theories of graphs and of matroids, important polynomial invariants were discovered including the chromatic polynomial, the Ising model partition function, and the flow polynomial, many of which are generalized by the Tutte polynomial \cite{tutte1954contribution,white1987combinatorial,brylawski1992tutte,oxley1992matroid}. The notion of using generating functions is ubiquitous in many areas of combinatorics and many these polynomials can be viewed as such functions. Another general class of polynomials associated to graphs is the partition function of an edge coloring model \cite{delaharpe1993graph}.

On the other hand, polynomials show up not just as graph parameters. Noga Alon famously used polynomial equations to actually prove theorems about graphs using his celebrated ``Combinatorial Nullstellensatz'' \cite{alon1999combinatorial}. His approach often involved finding a set of polynomial equations whose solutions corresponded to some combinatorial property of interest and then studying this system. 

Modeling a decision problem of finding a combinatorial structure in a graph by asking if a certain system of polynomial equations has a common zero is appealing from a purely computational point of view. This allows these problems to be approached with well-known algebraic algorithms from simple Gaussian elimination to Gr\"obner basis algorithms \cite{de1995grobner,cox1992ideals}. Using polynomial systems to model these problems also seems to work nicely with semi-definite programming methods \cite{lasserre2002polynomials,laurent2005semidefinite,laurent2007semidefinite,parrilo2003semidefinite}.

This approach was used in \cite{Loera:2009:ECP:1552058.1552063}, where the question of infeasibility was considered. If a set of polynomial equations is infeasible, Hilbert's Nullstellensatz implies that there is a set of polynomials acting as a certificate for this infeasibility. For any given finite simple graph, a polynomial system whose solutions corresponded to independent sets of size $k$  was originally formulated by L\'aszl\'o Lov\'asz \cite{lovasz1994stable}, but other articles have studied the problem algebraically \cite{li1981independence,simis1994ideal}.

One of the interesting results in \cite{Loera:2009:ECP:1552058.1552063} was to show that for these particular systems of polynomials, the Nullstellensatz certificate contained a multivariate polynomial with a natural bijection between monomials and independent sets in the graph. As such, the Nullstellensatz certificate can be viewed as an enumeration of the independent sets of a graph. Furthermore, the independence polynomial can quickly be recovered from this certificate. 
Later, when modeling the set partition problem, a similar enumeration occurred in the Nullstellensatz certificate \cite{margulies2015complexity}.

This paper is directly inspired by these two results and we look at the different systems of polynomials given in \cite{Loera:2009:ECP:1552058.1552063} and show that this phenomenon of enumerative Nullstellensatz certificates shows up in all of their examples. We explain this ubiquity in terms of inversion in Artinian rings. One important example considered in \cite{Loera:2009:ECP:1552058.1552063} was $k$-colorable subgraphs. This problem has also been studied by analyzing polynomial systems in \cite{alon1992colorings,eliahou1992algebraic,matiyasevich2001some,de2015graph}. We generalize the polynomial systems used in \cite{Loera:2009:ECP:1552058.1552063} to arbitrary graph homomorphisms.

We also consider existence of planar subgraphs, cycles of given length, regular subgraphs, vertex covers, edge covers, and perfect matchings.  On the one hand, these results may be viewed negatively as they imply that a certificate for infeasibility contains much more information than necessary to settle a decision problem. There have also been papers on attempting efficient computations of Nullstellensatz certificates (\cite{de2008hilbert,de2011computing}) and we should expect that often this will be very hard. On the other hand, if one wishes to enumerate combinatorial structures, our results imply algorithms built from known algebraic techniques to solve this problem.

One polynomial system that does not fall into the general setting of the other examples is that of perfect matchings. While there is a natural system of equations modeling this problem that does have enumerative Nullstellensatz certificates, there is another system of polynomials generating the same ideal that does not. We spend the latter part of this paper investigating the second set and try to achieve some partial results in explaining what combinatorial information is contained in the Nullstellensatz certificates.

This paper is organized as follows. In Section \ref{sec1}, we review the necessary background on the Nullstellensatz and make our definitions precise. We then present our motivating example, independent sets, and explain how this particular problem serves as a prototype for other interesting problems. In Section \ref{sec2}, we prove a sufficient condition for a Nullstellensatz certificate to enumerate combinatorial structures and give several examples, some of them new and some reformulations of old ones, that satisfy this property. Lastly, in Section \ref{sec3}, we look at a system of polynomials whose solutions are perfect matchings that do not satisfy this sufficient condition and prove some results about the certificates.

\section{Background}\label{sec1}

Given a system of polynomials $f_1,\dots,f_s\in \C[x_1,\dots,x_n]$,  consider the set $$\mathcal{V}(f_1,\dots,f_s)=\{(\alpha_1,\dots,\alpha_n)\in\C^n|\;f_1(\alpha_1,\dots,\alpha_n)=\cdots=f_s(\alpha_1,\dots,\alpha_n)=0\}.$$ We call such a set an \emph{variety} (by an abuse of language, we use the term even for reducible and non-reduced sets in this paper). In particular, the empty set is a variety and if $\mathcal{V}(f_1,\dots,f_s)=\emptyset$, we say that the system of polynomials $f_1,\dots,f_s$ is \emph{infeasible}.

One version David Hilbert's famous Nullstellensatz states that a system $f_1,\dots,f_s $ $\in \C[x_1,\dots,x_n]$ is infeasible if and only if there exists polynomials $\beta_1,\dots,\beta_s\in \C[x_1,\dots,x_n]$ such that $\sum{\beta_if_i}=1$ (cf. \cite{cox1992ideals}). The set of polynomials $\beta_1,\dots,\beta_s$ are called a \emph{Nullstellensatz certificate} for the infeasibility of the system. The degree of the Nullstellensatz certificate is defined to be $\max\{\deg(\beta_1),\dots,\deg(\beta_s)\}$. We note that a Nullstellensatz certificate is dependent on the choice of polynomials defining the system. We will revisit this point later. The second observation is that Nullstellensatz certificates aren't unique. Often the Nullstellensatz certificates of greatest interest are those of minimum degree.

Research into an ``effective Nullstellensatz" has yielded general bounds for the degree of a Nullstellensatz certificate of a system of polynomials \cite{brownawell1987bounds,kollar1988sharp}. As such, the following  general algorithm for finding such certificates has been proposed (cf. \cite{de2008hilbert,de2011computing}). Suppose we have a system of $s$ equations in $n$ variables, $f_1,\dots,f_s$. We want to find $\beta_1,\dots,\beta_s$ such that $\sum_{i=1}^s{\beta_if_i}=1$. Let $\mathscr{M}_{n,k}$ denote the set of monomials of degree $\le k$ in $n$ variables.

\begin{algorithm} 
\caption{
Basic Outline of the NulLA Algorithm.}\label{thealgo}

\begin{algorithmic}
\State{Suppose we know from some theorem that a Nullstellensatz certificate must have degree at most $d$.}
\State{$i= 0$.}
 \While{$i\le d$}
 \Comment{test every degree for a certificate}
 \State{For $i\in [s]$, $\beta_i:=\sum_{M\in\mathscr{M}_{n,d}}{\alpha_M M}$.}
 \State{Let $\mathcal{L}$ be the empty set of linear equations.}
 \For{$M\in\mathscr{M}_{n,d}$}
 \State{Determine the coefficient of $M$ in $\sum{\beta_if_i}$, $L_M$.}
 \If{$M=1$}{ Append $L_M=1$ to $\mathcal{L}$.} 
 \Else{ Append $L_M=0$ to $\mathcal{L}$.}
 \EndIf
 \EndFor
 \State{Solve the system $\mathcal{L}$ if possible.}
 \If{$\mathcal{L}$ has a solution} { Output "Yes" and Exit While Loop.}
 \EndIf
 \If{ $i=d$}{ Output "No".}
 \Else{ $i\mapsto i+1$.}
 \EndIf
 \EndWhile
\end{algorithmic}
\end{algorithm}

We summarize the above pseudocode. First guess at the degree of the Nullstellensatz certificate and then consider generic polynomials $\beta_i$ in variables $x_1,\dots,x_n$ of said degree. Then the condition $\sum{\beta_if_i}=1$ can be reformulated as system of linear equations whose solutions give the coefficients each $\beta_i$ should have.

If the linear system has no solution, the guessed degree is increased. The general degree bounds guarantee that this process will terminate eventually, implicitly finding a valid certificate of minimal degree, provided the initial polynomial system was infeasible. This algorithm is similar to the XL style Gr\"obner basis algorithms studied in algebraic cryptography \cite{courtois2000efficient,ars2004comparison}. One way to understand the complexity of this algorithm is to look at the Nullstellensatz certificates that get produced for different systems of polynomials. This algorithm is one of the main motivations behind the inquiry into Nullstellensatz certificates.

In this paper, we will consider combinatorial problems modeled by systems of polynomials in $\C[x_1,\dots,x_n]$. The problems we consider will all come from the theory of finite graphs and all varieties will be zero dimensional.

\subsection{Motivating Example}\label{subsec:motex}

Let us give the first example of a polynomial system modeling a graph problem: independent sets. Lov\'asz gave the following set of polynomials for determining if a graph has an independent set of size $m$.

\begin{proposition}[\cite{lovasz1994stable}]\label{prop:indpolysystem}
 Given a graph $G$, every solution of the system of equations
 
 \begin{center}
 \begin{tabular}{r l}
 $x_i^2-x_i=0,$ & $i\in V(G),$\\
 $x_ix_j=0,$ &  $\{i,j\}\in E(G),$\\
 $\sum_{i=1}^n{x_i}=m$&
 \end{tabular}
 \end{center}
 corresponds to an independent set in $G$ of size $m$.
\end{proposition}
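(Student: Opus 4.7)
The plan is to read each equation as a constraint on a putative common zero $(\alpha_1,\dots,\alpha_n)\in\C^n$ and then identify what combinatorial object such a tuple represents. First I would exploit the Boolean relations $x_i^2-x_i=0$. Each factors as $x_i(x_i-1)=0$ in $\C[x_i]$, so every solution must satisfy $\alpha_i\in\{0,1\}$ for every $i\in V(G)$. Consequently, each solution is the indicator vector $\chi_S\in\{0,1\}^n$ of a uniquely determined subset $S:=\{i:\alpha_i=1\}\subseteq V(G)$, and distinct subsets give distinct tuples.

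Next I would use the edge equations to cut down which subsets arise. For each $\{i,j\}\in E(G)$, the relation $\alpha_i\alpha_j=0$ together with $\alpha_i,\alpha_j\in\{0,1\}$ forces that $i$ and $j$ cannot both lie in $S$. Since this holds for every edge, $S$ is an independent set; conversely, every independent set visibly satisfies all edge equations. Finally, the linear equation $\sum_{i=1}^n\alpha_i=m$ evaluates to $|S|=m$ because the nonzero entries of $\chi_S$ are all $1$. Assembling these three observations gives a bijection between solutions in $\C^n$ and independent sets of $G$ of size exactly $m$.

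The argument is essentially direct verification with no real obstacle. The only conceptual step worth flagging is the reduction from a $\C$-valued solution to a $0/1$-valued one, which uses that $\C$ is an integral domain so that $x_i(x_i-1)=0$ has exactly the two roots $0,1$; after that, the edge and degree constraints translate literally into the definition of an independent set of the prescribed cardinality.
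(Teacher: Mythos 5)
Your proof is correct and matches the paper's own (informal) justification exactly: the Boolean equations force $0/1$ coordinates, the edge equations force independence, and the linear equation fixes the cardinality at $m$. Nothing further is needed.
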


It is not hard to see that the equations in Proposition \ref{prop:indpolysystem} define a zero-dimensional variety whose points are in bijection with independent sets of size $m$. The first equation says that $x_i=0,1$ for all $i\in V(G)$. So every vertex is either in a set or not. The second equation says that two adjacent vertices cannot both be in the same set. The last equation says that precisely $m$ vertices are in the set.

If this system is infeasible, then the Nullstellensatz certificate has been explicitly worked out \cite{Loera:2009:ECP:1552058.1552063}. Many of properties of the certificate encodes combinatorial data. For example, one does not need to appeal to general effective Nullstellensatz bounds as the degree of the Nullstellensatz certificate can be taken to be the independent set number of the graph in question. Combining several theorems from that paper, the following is known.

\begin{theorem}[\cite{Loera:2009:ECP:1552058.1552063}]\label{thm:indsetcert}
 Suppose the system of equations in Proposition \ref{prop:indpolysystem} are infeasible. Then the minimum degree Nullstellensatz certificate is unique and has degree equal to $\alpha(G)$, the independence number of $G$. If 
 $$1=A(-m+\sum_{i=1}^n{x_i})+\sum_{\{i,j\}\in E(G)}{Q_{ij}x_ix_j}+\sum_{i=1}^n{P_i(x_i^2-x_i)},$$
 with certificate polynomials $A, Q_{ij}$, and $P_i$, then the degree of this certificate is realized by $A$; $\deg(P_i)\le \alpha(G)-1$ and $\deg(Q_{ij})\le \alpha(G)-2$. Furthermore, if the certificate is of minimum degree, the monomials in $A$ with non-zero coefficients can be taken to be precisely those of the form $\prod_{i\in I}{x_i}$ where $I$ is an independent set of $G$. Lastly, the polynomials $A$, $Q_i$, and $P_i$ all have positive real coefficients.
\end{theorem}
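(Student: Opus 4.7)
The plan is to work in the Artinian quotient ring $R := \C[x_1,\dots,x_n]/J$, where $J$ is the ideal generated by $\{x_ix_j : \{i,j\} \in E(G)\} \cup \{x_i^2 - x_i : i \in V(G)\}$. After reducing any monomial modulo $J$, it becomes either $0$ or a square-free monomial $m_I := \prod_{i \in I} x_i$ supported on an independent set $I$, and these $m_I$ (as $I$ ranges over independent sets, including $\emptyset$) form a $\C$-basis of $R$. The variety of $J$ is $V(R) = \{\mathbf{1}_I : I \text{ independent}\}$, so $L := -m + \sum_{i=1}^n x_i$ evaluates to $|I| - m$ at the point corresponding to $I$. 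Infeasibility of the Lovász system is therefore equivalent to $|I| \neq m$ for every independent $I$, i.e.\ $m > \alpha(G)$, in which case $L$ is a unit in the finite-dimensional algebra $R$.

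The next step is to compute $A := L^{-1} \in R$ explicitly in the basis $\{m_I\}$. A direct calculation gives
\[
L \cdot m_I \;=\; (|I| - m)\, m_I \;+\; \sum_{\substack{j \notin I \\ I \cup \{j\} \text{ indep}}} m_{I \cup \{j\}},
\]
so multiplication by $L$ is a scalar operator plus one that strictly increases $|I|$. Inverting by recursion on $|I|$ produces a closed-form expression for the coefficient of $m_I$ in $A$ of the shape $|I|!/\prod_{k=0}^{|I|}(m-k)$. The factors $m-k$ for $0 \le k \le \alpha(G)$ are all nonzero (indeed, all strictly positive) precisely because $m > \alpha(G)$, so (i) every independent-set monomial $m_I$ appears in $A$ with nonzero coefficient, (ii) these coefficients all share the same sign, and (iii) $\deg A = \alpha(G)$, realized by any maximum independent set. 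Uniqueness of the minimum-degree $A$ follows because $L^{-1}$ is unique in $R$, and the basis $\{m_I\}$ provides the unique representative of minimum total degree.

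To recover the full Nullstellensatz certificate, I would lift: since $AL \equiv 1 \pmod J$, one can write $1 - AL = \sum_{\{i,j\} \in E(G)} Q_{ij}\, x_i x_j + \sum_{i=1}^n P_i(x_i^2 - x_i)$ by running the reduction of $AL$ modulo $J$ monomial-by-monomial and collecting the multipliers used at each reduction step. Each reduction $x_j \cdot m_I \mapsto 0$ for $j$ adjacent to some $i \in I$ contributes to some $Q_{ij}$, and each reduction $x_i \cdot m_I \mapsto m_I$ for $i \in I$ contributes to $P_i$. Since $A$ is itself supported on monomials of degree $\le \alpha(G)$ on independent sets, tracking the degrees through this reduction yields $\deg Q_{ij} \le \alpha(G) - 2$ and $\deg P_i \le \alpha(G)-1$, while the uniform sign of the coefficients of $A$ (normalized appropriately via the overall sign of $L$) propagates to give the claimed positivity of the coefficients of $Q_{ij}$ and $P_i$.

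The hard part is the simultaneous degree and sign bookkeeping in this final step: the explicit form of $A$ and its degree fall out cleanly from the quotient structure of $R$, but establishing $\deg P_i \le \alpha(G)-1$, $\deg Q_{ij} \le \alpha(G)-2$, and positivity requires a careful inductive analysis of how each reduction of $AL$ interacts with the recursively computed coefficients of $A$. The key combinatorial ingredient is that every monomial that arises in the reduction is supported on some independent set, which simultaneously caps the degrees and ensures the denominators $m(m-1)\cdots(m-|I|)$ appearing in the recursion remain of constant sign. Once this is in place, the claim that the overall degree is realized by $A$ itself is a direct consequence of the degree bounds on $Q_{ij}$ and $P_i$ together with $\deg A = \alpha(G)$.
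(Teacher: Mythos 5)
Your overall strategy---inverting $L=-m+\sum_i x_i$ in the Artinian quotient $R=\C[x_1,\dots,x_n]/J$, identifying the basis of $R$ with independent-set monomials, and lifting $AL-1$ back to a certificate by division---is exactly the mechanism this paper abstracts in Section \ref{sec2} (Lemma \ref{lem:onlygoodmons}, Theorem \ref{thm:enumnull}, Proposition \ref{prop:lowdegredux}); note the paper itself does not prove Theorem \ref{thm:indsetcert} but imports it from \cite{Loera:2009:ECP:1552058.1552063}. Your computation of the support of $A$, the recursion for its coefficients, and $\deg A=\alpha(G)$ are sound, up to a sign: the recursion actually gives $a_I=-|I|!/\prod_{k=0}^{|I|}(m-k)$, so under the normalization $1=A(-m+\sum x_i)+\cdots$ the coefficients of $A$ are all \emph{negative} while those of $P_i,Q_{ij}$ are positive (the positivity claim in the statement presumes the opposite sign on the linear form).

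The genuine gaps are in the step you yourself call the hard part, and one of them is not repairable. First, your monomial-by-monomial reduction cannot deliver $\deg Q_{ij}\le\alpha(G)-2$: the squarefree degree-$(\alpha+1)$ monomials $x_k m_I$ with $|I|=\alpha(G)$ and $k$ adjacent to $I$ appear in $AL$ with nonzero coefficient (no cancellation, since all $a_I$ share a sign), and reducing them by $x_ix_j$ leaves quotients of degree $\alpha(G)-1$. In fact the bound $\alpha(G)-2$ is false as stated: for $G=K_2$ and $m=2$ one has $\alpha=1$, yet evaluating any certificate at $(1,1)$ forces $Q_{12}(1,1)=1$, so $Q_{12}$ is a nonzero constant of degree $0>\alpha-2$. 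So the most you can honestly extract from your procedure is $\deg Q_{ij}\le\alpha(G)-1$. Second, your uniqueness argument only shows that $A$ is determined modulo $J$ and that the normal form minimizes degree within its coset; it does not address uniqueness of the full tuple $(A,Q_{ij},P_i)$, nor does it rule out a representative $A+j$, $j\in J$, of the same degree sitting inside a different certificate. (A leading-term argument---no element of $J$ has leading form supported on independent-set squarefree monomials---is needed even to conclude $\deg A'\ge\alpha(G)$ for an arbitrary certificate.) Third, the positivity of the $P_i$ and $Q_{ij}$ is asserted to ``propagate'' through the reduction, but that propagation is precisely the inductive claim that needs proof; as written it is a restatement of the goal rather than an argument.
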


So Theorem \ref{thm:indsetcert} tells us for a minimum degree certificate, the polynomial $A$ enumerates all independent sets of $G$. The coefficients of the monomials, however, will not necessarily be one, so it is not precisely the generating function for independents sets. The precise coefficients were worked out in \cite{Loera:2009:ECP:1552058.1552063}. In the next section, we show that a similar theorem will hold for many other examples of zero-dimensional varieties coming from combinatorics.

\section{Rephrasing Nullstellensatz certificates as inverses in an Artinian ring}\label{sec2}

Recall that a ring $S$ is called \emph{Artinian} is it satisfies the descending chain condition on ideals, i.e. for every infinite chain of ideals $\cdots\subsetneq I_2\subsetneq I_1\subset S$, there is some $i$ such that $I_{k}=I_{k+1}$ for all $k\ge i$. Equivalently, viewed as a left module over itself, $S$ is finite dimensional, meaning it contains finitely many monomials.

Let $\mathcal{V}$ be a variety in $\C^n$ defined by the equations $f_1,\dots,f_s$. Although not standard, we do not assume that $\mathcal{V}$ is reduced or irreducible, which is to say that often the ideal $\langle f_1,\dots,f_s\rangle$ will not be radical or prime. An ideal $I(\mathcal{V}):=\langle f_1,\dots,f_s\rangle$ is radical if $g^\ell\in I(\mathcal{V})\implies g\in I(\mathcal{V})$. The quotient ring $\C[\mathcal{V}]:=\C[x_1,\dots,x_n]/I(\mathcal{V})$ is called the \emph{coordinate ring} of $\mathcal{V}$. It is an elementary fact from algebraic geometry that $\mathcal{V}$ is zero dimensional if and only if $\C[\mathcal{V}]$ is Artinian. This is true even for non-reduced varieties.

The polynomial systems coming from combinatorics are designed to have a solution if some combinatorial structure exists. In Subsection \ref{subsec:motex}, the combinatorial structure of interest was independent sets in a graph. Many of the polynomials systems that show up in examples have a particular set of equations that always play the same role. Given a polynomial system in $\C[x_1,\dots,x_n]$, we call a subset of the variables $x_i$ for $i\in I\subseteq[n]$ \emph{indicator variables} if the polynomial system includes the equations $x_i^2-x_i=0$ for $i\in I$ and $-m+\sum_{i\in I}{x_i}=0$ for some $m\in\N$. This was the case for the example in Subsection \ref{subsec:motex}.

The indicator variables often directly correspond combinatorial objects, e.g. edges or vertices in a graph. The equations $x_i^2-x_i=0$ means that object $i$ is either in some structure or not. Then the equation $-m+\sum_{i\in I}{x_i}=0$ says that there must be $m$ objects in the structure. The other equations in the polynomial system impose conditions the structure must satisfy.

Now suppose we are given an infeasible polynomial system $f_1=0,\dots,f_s=0$ in $R:=\C[y_1,\dots,y_n,x_1,\dots,x_p]$, where $y_1,\dots,y_n$ are indicator variables. Without loss of generality, let $f_1=-m+\sum_{i=1}^n{y_i}$ for some $m\in\N$. Then one way to find a Nullstellensatz certificate for this polynomial system is to find the inverse of $f_1$ in $R/\langle f_2,\dots,f_s\rangle$, which we denote $\beta_1$, and then express $f_1\beta_1-1$ as $\sum_{i=2}^s{\beta_if_i}$. The polynomials $\beta_1,\beta_2,\dots,\beta_s$ will be a Nullstellensatz certificate.

Throughout the rest of this section, we consider an infeasible polynomial system $f_1=0,\dots,f_s=0$ in $A:=\C[y_1,\dots,y_n,x_1,\dots,x_p]$ where $y_1,\dots,y_n$ are indicator variables and $f_1=-m+\sum_{i=1}^n{y_i}$ for some $m\in\N$ not equal to zero. We let $$R:=A/\langle f_2,\dots,f_s\rangle$$ and $\mathcal{V}=\tn{Spec}(R)$ be the variety defined by $f_2,\dots,f_s$. 

\begin{lemma}\label{lem:artin}
 The ring $R$ is Artinian if and only if for every $a\in\N$, the polynomial system $-a+\sum_{i=1}^n{y_i}=0,f_2=0,\dots,f_s=0$ has finitely many solutions.
\end{lemma}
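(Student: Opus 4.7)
The plan is to translate the Artinianness of $R$ into a geometric statement about $\mathcal{V}=\mathcal{V}(f_2,\dots,f_s)\subset\C^{n+p}$, using the equivalence recalled just before the lemma: for any (possibly non-reduced) affine variety, its coordinate ring is Artinian if and only if the variety is zero-dimensional, i.e.\ finite as a set of points. The lemma then reduces to showing that $\mathcal{V}$ is finite if and only if, for every $a\in\N$, the subvariety cut out by the extra equation $-a+\sum_{i=1}^n y_i=0$ is finite.

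The forward direction is essentially immediate: if $\mathcal{V}$ is finite, then adjoining any additional polynomial equation only intersects $\mathcal{V}$ with another closed subset, giving a subset of a finite set, hence finite. No analysis of the $f_i$ is required here.

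For the converse, the decisive observation is that $y_1,\dots,y_n$ are indicator variables, so by the very definition given earlier in the section the equations $y_i^2-y_i=0$ are part of the defining system. Since $f_1=-m+\sum_{i=1}^n y_i$ is singled out, these $n$ indicator equations necessarily appear among $f_2,\dots,f_s$. Consequently every point of $\mathcal{V}$ satisfies $y_i\in\{0,1\}$, and therefore the value of $\sum_{i=1}^n y_i$ on any point of $\mathcal{V}$ lies in the finite set $\{0,1,\dots,n\}$. This yields the decomposition
$$\mathcal{V} \;=\; \bigcup_{a=0}^{n} \mathcal{V}\!\left(-a+\sum_{i=1}^n y_i,\, f_2,\dots,f_s\right),$$
a union of only finitely many pieces. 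By hypothesis each piece is finite, so $\mathcal{V}$ is finite and $R$ is Artinian.

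There is really no major obstacle; the whole argument hinges on noticing that the indicator equations, which are already forced to be part of $f_2,\dots,f_s$, confine $\sum y_i$ to the bounded integer range $\{0,1,\dots,n\}$. This is what allows the pointwise-in-$a$ finiteness assumption to be assembled into global finiteness of $\mathcal{V}$; without the indicator condition, one could a priori have infinitely many non-empty level sets and the conclusion would fail.
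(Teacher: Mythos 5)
Your proof is correct and follows essentially the same route as the paper: both directions rest on the observation that the indicator equations $y_i^2-y_i=0$ lie among $f_2,\dots,f_s$, forcing $\sum_{i=1}^n y_i\in\{0,\dots,n\}$ on $\mathcal{V}$ and hence decomposing $\mathcal{V}$ into finitely many level sets, combined with the stated equivalence between zero-dimensionality and Artinianness. No issues to report.
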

\begin{proof}
We look at the variety defined by the equations $f_2,\dots,f_s$. Since $y_1,\dots,y_n$ are indicator variables, the equations $y_i^2-y_i=0$ are among the equations $f_2=0,\dots,f_s=0$. Thus any solution to these equations must have every $y_i$ is equal to either zero or one. Thus there are only finitely many $a\in\N$ such that  $-a+\sum_{i=1}^n{y_i}=0,f_2=0,\dots,f_s=0$ has a solution as $a$ must be less than or equal to $n$. Furthermore, each such polynomial system has finitely many solutions so the system $f_2=0,\dots,f_s=0$ only has finitely many solutions. Thus $\mathcal{V}$ is zero dimensional and the ring $R$ is Artinian. Conversely, if $R$ is Artinian, there can be only finitely many solutions to the system $f_2=0,\dots,f_s=0$ and thus only a finite subset of those solutions can also be a solution to the equation $-a+\sum_{i=1}^n{y_i}=0$.
\end{proof}

From here on out, we assume that $R$ is Artinian as this will be the case in every example we consider. This is the consequence of the fact that the polynomial systems are designed to have solutions corresponding to some finite combinatorial structure inside of some larger, yet still finite, combinatorial object. In our examples, we are always looking for some graph structure inside a finite graph. The examples we consider also satisfy another property that we shall assume throughout the rest of this section unless otherwise stated.

\begin{defn}
The system of polynomials $f_2=0,\dots,f_s=0$ in indicator variables $y_1,\dots,y_n$ is called \emph{subset closed} if
\begin{enumerate}[(a)]
 \item There is a solution of the system where $y_1=\dots=y_n=0$ and 
 \item Let $I\subseteq [n]$ and $\chi_I(i)=1$ if $i\in I$ and 0 else. If there is a solution of the system where $y_i=\chi_I(i)$, then for all $J\subseteq I$, there is a solution of the system where $y_j=\chi_J(j)$.
\end{enumerate}
\end{defn}

It is very easy to describe the monomials with non-zero coefficients appearing in the inverse of $f_1$ in the ring $R$. We look at the variety defined by the polynomials $f_2,\dots,f_s$ which consists of finitely many points. Suppose we are given a solution to the system $f_2=0,\dots,f_s=0$: $y_1=d_1,\dots,y_n=d_n$, and $x_1=a_1,\dots,x_p=a_p$. We can then map this solution to the point $(d_1,\dots,d_n)\in\{0,1\}^n$. We see that each solution to the system $f_2,\dots,f_s$ can be associated to a point on the $n$-dimensional hypercube $\{0,1\}^n$. Let $B$ be the subset of $\{0,1\}^n$ which is the image of this mapping.

Given $\textbf{d}=(d_1,\dots,d_n)\in \{0,1\}^n$, we can associate to it the monomial $y_{\textbf{d}}:=\prod_{i,d_i=1}{y_{i}}$. For any $\textbf{b}=(b_1\dots,b_n)\in B\setminus(0,\dots,0)$, the monomial $y_{\textbf{b}}$ regarded as a function restricted to $\mathcal{V}$ is not identically zero as there is a point in $\mathcal{V}$ where $y_{i}=1$ for all $b_i=1$ in $(b_1,\dots,b_n)$. 

\begin{lemma}\label{lem:onlygoodmons}
 If $f_1=0,\dots,f_s=0$ is subset closed, then for $\textbf{d}=(d_1,\dots,d_n)\notin B$, $y_{\textbf{d}}$ is in the ideal generated by $f_2,\dots,f_s$.
\end{lemma}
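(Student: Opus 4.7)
The plan is to exploit an elementary polynomial identity expressing $y_{\mathbf{d}}$ as a sum of ``Boolean minterms'':
\[
y_{\mathbf{d}} \;=\; \sum_{\mathbf{b}\ge\mathbf{d}} e_{\mathbf{b}},\qquad e_{\mathbf{b}}\;:=\;\prod_{i=1}^n y_i^{b_i}(1-y_i)^{1-b_i},
\]
which already holds as an identity in $\C[y_1,\dots,y_n]\subseteq A$: insert the trivial factor $\prod_{j:\,d_j=0}(y_j+(1-y_j))=1$ into $y_{\mathbf{d}}=\prod_{i:\,d_i=1}y_i$ and expand. Condition (b) of subset-closedness says $B$ is downward closed in the coordinatewise order on $\{0,1\}^n$; its contrapositive is that if $\mathbf{d}\notin B$ then every $\mathbf{b}\ge\mathbf{d}$ appearing in the sum also lies outside $B$. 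Hence the lemma reduces to showing $e_{\mathbf{b}}\in\langle f_2,\dots,f_s\rangle$ whenever $\mathbf{b}\notin B$.

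To establish this, I first check that $e_{\mathbf{b}}$ vanishes on $\mathcal{V}$: every point of $\mathcal{V}$ has its $y$-coordinates in $\{0,1\}^n$ (since $y_i^2-y_i\in\langle f_2,\dots,f_s\rangle$), and on such a point $e_{\mathbf{b}}$ equals $1$ exactly when $(y_1,\dots,y_n)=\mathbf{b}$ and $0$ otherwise. Because $\mathbf{b}\notin B$ this value is never realized, so $e_{\mathbf{b}}$ is identically zero on $\mathcal{V}$, and Hilbert's (strong) Nullstellensatz gives $e_{\mathbf{b}}^k\in\langle f_2,\dots,f_s\rangle$ for some $k\ge 1$. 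The key observation now is that the indicator relations $y_i^2\equiv y_i$ make $e_{\mathbf{b}}$ \emph{idempotent modulo the ideal}: a direct expansion using $y_i^2\equiv y_i$ and $(1-y_i)^2\equiv 1-y_i$ yields $e_{\mathbf{b}}^2\equiv e_{\mathbf{b}}$ in $R$, hence $e_{\mathbf{b}}^k\equiv e_{\mathbf{b}}$ for every $k\ge 1$. Comparing this with the Nullstellensatz output forces $e_{\mathbf{b}}\in\langle f_2,\dots,f_s\rangle$, and summing over $\mathbf{b}\ge\mathbf{d}$ in the initial identity closes the argument.

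The main obstacle is exactly this final upgrade: the ideal $\langle f_2,\dots,f_s\rangle$ need not be radical, so the Nullstellensatz by itself only delivers a power of $e_{\mathbf{b}}$, and without further structure one cannot descend from $e_{\mathbf{b}}^k$ to $e_{\mathbf{b}}$. What rescues the argument is the indicator-variable structure that is part of the hypothesis: it forces each $e_{\mathbf{b}}$ to be idempotent in $R$, which collapses powers back to the element itself. The remaining ingredients (the minterm identity and downward closedness of $B$) are essentially bookkeeping.
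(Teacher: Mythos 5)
Your proof is correct and follows essentially the same route as the paper's: both establish that the relevant monomial vanishes on $\mathcal{V}$ (using downward closedness of $B$ from subset-closedness), invoke the strong Nullstellensatz to place some power of it in $\langle f_2,\dots,f_s\rangle$, and then use idempotency modulo the relations $y_i^2-y_i$ to descend from that power to the element itself. The only difference is your intermediate decomposition of $y_{\mathbf{d}}$ into the minterms $e_{\mathbf{b}}$, which the paper bypasses by applying the same three steps directly to $y_{\mathbf{d}}$ (a point of $\mathcal{V}$ where $y_{\mathbf{d}}=1$ has $y$-support containing that of $\mathbf{d}$, so subset-closedness already forces $\mathbf{d}\in B$).
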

\begin{proof}
 If this were not the case, there would be a point $v=(c_1,\dots,c_n,\gamma_1,\dots,\gamma_p)\in\mathcal{V}$ such that $y_{\textbf{d}}(v)=1$. Let $I$ be the set $\{i\in[n]|c_i=1\}$. If $J=\{i\in[n]|d_i=1\}$, we see that $J\subseteq I$ and that there is a solution where $y_i=\chi_I(i)$. So there must be a solution where $y_i=\chi_J(i)$ by the property of being subset closed. This implies that $(d_1,\dots,d_n)\in B$ since $d_i=\chi_J(i)$, and so we have a contradiction.
 
 This means that for $\textbf{d}\notin B$, there is some $k\in\N$ such that $y_{\textbf{d}}^k=0$ in the ring $R$. The exponent $k$ may a priori be greater than one as we have not assumed that the ideal $\langle f_2,\dots,f_s\rangle$ is radical. However, we note that for any $\textbf{d}\in \{0,1\}^n$, $y_{\textbf{d}}^k=y_{\textbf{d}}$ for all $k\in\N$ because the equations $y_i^2-y_i$ for all $i\in [n]$ are among the polynomials $f_2,\dots,f_s$. Thus for $\textbf{d}\notin B$, $y_{\textbf{d}}=0$ in the ring $R$. 
\end{proof}

We can thus conclude that the monomials in the indicator variables in $R$ are in bijection with combinatorial structures satisfying the constraints of the polynomial system. In Proposition \ref{prop:indpolysystem}, the ring $\C[x_1,\dots,x_{|V(G)|}]$ modulo the first two sets of polynomials gives a ring whose monomials are of the form $\prod_{i\in I}{x_i}$, where $I$ indexes vertices in an independent set of $G$.

\begin{lemma}\label{lem:noposrel}
 Given $\textbf{b}_1,\dots,\textbf{b}_k\in B$, there are no polynomial relations of the form $\sum_{i=1}^k{a_iy_{\textbf{b}_i}}=0$ for $a_i\in\R_{\ge0}$ in $R$ except for all $a_i=0$. Similarly if all $a_i\in\R_{\le 0}$.
\end{lemma}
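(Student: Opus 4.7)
The plan is to transfer a purported relation in $R$ to numerical identities obtained by evaluating at well-chosen $\C$-points of $\mathcal{V}$, and then extract a contradiction from the sign hypothesis via a minimal-support argument.

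First I would reduce to the case that $\textbf{b}_1,\dots,\textbf{b}_k$ are pairwise distinct: if two coincide, the corresponding monomials can be combined into a single term, the combined coefficient is still in $\R_{\ge 0}$, and if that combined coefficient vanishes then so do the original non-negative summands. Next, using that $\textbf{b}_i \in B$ is by definition the indicator projection of some $\C$-point of $\mathcal{V}$, I would select for each $i$ a witness $v_i \in \mathcal{V}$ whose indicator coordinates are exactly $\textbf{b}_i$. A direct computation then gives
\[
 y_{\textbf{b}_i}(v_j) \;=\; \prod_{\ell\,:\,(\textbf{b}_i)_\ell=1}(\textbf{b}_j)_\ell \;\in\;\{0,1\},
\]
which equals $1$ if and only if $\mathrm{supp}(\textbf{b}_i)\subseteq\mathrm{supp}(\textbf{b}_j)$.

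Now I would argue by contradiction. Assume $\sum_{i=1}^k a_i y_{\textbf{b}_i}=0$ in $R$ with all $a_i\ge 0$ and $S:=\{i:a_i>0\}$ non-empty. Since any element of $R$ gives a well-defined function on $\mathrm{Spec}(R)(\C)$, this identity forces the same identity pointwise on $\mathcal{V}$. Pick $i^*\in S$ so that $\mathrm{supp}(\textbf{b}_{i^*})$ is minimal under inclusion among $\{\mathrm{supp}(\textbf{b}_i) : i\in S\}$ and evaluate at $v_{i^*}$. This yields a sum of non-negative real numbers equal to $0$. By the support formula above, any $i\in S$ contributing a nonzero term must satisfy $\mathrm{supp}(\textbf{b}_i)\subseteq\mathrm{supp}(\textbf{b}_{i^*})$; by minimality this forces equality of supports, hence $\textbf{b}_i=\textbf{b}_{i^*}$, and by distinctness $i=i^*$. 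Therefore the sum reduces to $a_{i^*}=0$, contradicting $i^*\in S$. The $\R_{\le 0}$ statement follows immediately by multiplying through by $-1$ and applying the first case.

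The proof is short and I do not anticipate a real obstacle. The only points meriting care are the reduction to distinct $\textbf{b}_i$ and the (elementary) observation that polynomial identities in the possibly non-reduced ring $R$ still pass to pointwise identities on its $\C$-points. The conceptual engine is the minimality-plus-non-negativity dichotomy; the subset-closed and Artinian hypotheses in force throughout this section enter only insofar as they guarantee that each $\textbf{b}_i \in B$ admits a witness point $v_i \in \mathcal{V}$.
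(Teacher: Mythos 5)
Your proof is correct and follows essentially the same route as the paper: restrict the relation to points of $\mathcal{V}$, where each monomial $y_{\textbf{b}_i}$ takes values in $\{0,1\}$, so a vanishing sum of non-negative terms forces each term to vanish, and evaluating at a witness point for $\textbf{b}_i$ gives $a_i=0$. Your minimal-support selection of $i^*$ is unnecessary (evaluating at $v_i$ for each $i$ already kills every $a_i$, since all terms of the non-negative sum must vanish there and the $i$-th term is $a_i\cdot 1$), but it does no harm; otherwise your write-up is just a more careful version of the paper's one-line argument.
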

\begin{proof}
 We note that because of the polynomials $y_i^2-y_i=0$, any monomial $y_{\beta_i}$ can only take the value of zero or one when restricted to $\mathcal{V}$. The only set of non-negative real numbers whose sum is zero is the trivial case where all are zero. The proof for the second assertion is the same as the first.
\end{proof}

\begin{theorem}\label{thm:enumnull}
 There is a Nullstellensatz certificate $\beta_1,\dots,\beta_s$ for the system $f_1=0,\dots,f_s=0$ such that the non-zero monomials of $\beta_1$ are precisely the monomials $y_{\textbf{b}}$ for $\textbf{b}\in B$.
\end{theorem}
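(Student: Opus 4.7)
The strategy is the one sketched just before Lemma \ref{lem:artin}: I will construct an inverse $\beta_1$ of $f_1$ inside $R$ whose polynomial representative involves only the monomials $y_{\mathbf{b}}$, $\mathbf{b}\in B$, and then read off $\beta_2,\ldots,\beta_s$ by expressing $\beta_1 f_1-1\in\langle f_2,\ldots,f_s\rangle$ as an $A$-combination of those generators. As a preliminary observation, $f_1$ is a unit in the Artinian ring $R$: an element of $R$ is a unit iff it avoids every maximal ideal, iff it does not vanish at any closed point of $\mathcal{V}$, and infeasibility of the full system rules out any closed point of $\mathcal{V}$ at which $f_1=0$.

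The heart of the argument is a structural description of the subalgebra $S\subseteq R$ generated by the images of $y_1,\ldots,y_n$. Because $y_i^2=y_i$ in $R$, any element of $S$ is a $\C$-linear combination of the monomials $y_{\mathbf{d}}$, $\mathbf{d}\in\{0,1\}^n$, and by Lemma \ref{lem:onlygoodmons} those with $\mathbf{d}\notin B$ vanish. For linear independence of the remaining family $\{y_{\mathbf{b}}\}_{\mathbf{b}\in B}$, I would assume a relation $\sum_{\mathbf{b}\in B}a_{\mathbf{b}}y_{\mathbf{b}}=0$ in $R$ and evaluate it at the closed point whose indicator-variable coordinates equal a prescribed $\mathbf{c}\in B$; this produces $\sum_{\mathbf{b}\subseteq\mathbf{c}}a_{\mathbf{b}}=0$ for every $\mathbf{c}\in B$. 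Since the subset-closed hypothesis makes $B$ downward-closed in the Boolean lattice, M\"obius inversion restricted to $B$ forces every $a_{\mathbf{b}}$ to vanish. Thus $\{y_{\mathbf{b}}\}_{\mathbf{b}\in B}$ is a $\C$-basis of $S$, and the orthogonal idempotents $\delta_{\mathbf{c}}:=\sum_{\mathbf{d}\in B,\,\mathbf{d}\supseteq\mathbf{c}}(-1)^{|\mathbf{d}|-|\mathbf{c}|}y_{\mathbf{d}}$ give an isomorphism $S\cong\C^B$ sending $y_i$ to the function $\mathbf{c}\mapsto c_i$ and $f_1$ to $\mathbf{c}\mapsto |\mathbf{c}|-m$.

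Subset-closure together with infeasibility force $|\mathbf{c}|<m$ for every $\mathbf{c}\in B$: otherwise some subset of $\mathbf{c}$ of size $m$ would lie in $B$, producing a solution of $f_2=\cdots=f_s=0$ at which $f_1$ also vanishes. Therefore $f_1$ is a unit in $\C^B$ with inverse $\mathbf{c}\mapsto 1/(|\mathbf{c}|-m)$, and expanding this inverse in the basis $\{y_{\mathbf{b}}\}_{\mathbf{b}\in B}$ yields the explicit polynomial
$$\beta_1=\sum_{\mathbf{b}\in B}a_{\mathbf{b}}\,y_{\mathbf{b}},\qquad a_{\mathbf{b}}=\sum_{k=0}^{|\mathbf{b}|}\binom{|\mathbf{b}|}{k}\frac{(-1)^{|\mathbf{b}|-k}}{k-m}.$$
The classical partial-fraction identity $\sum_{k=0}^{n}(-1)^k\binom{n}{k}/(x-k)=n!/\prod_{k=0}^{n}(x-k)$ then reduces this to $a_{\mathbf{b}}=(-1)^{|\mathbf{b}|+1}|\mathbf{b}|!/\prod_{k=0}^{|\mathbf{b}|}(m-k)$, and the bound $|\mathbf{b}|<m$ makes every denominator factor a positive integer, so no $a_{\mathbf{b}}$ vanishes. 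I expect the main technical hurdle to be the M\"obius-inversion step that pins down $S\cong \C^B$: once that structural identification is in hand, both the explicit formula for $\beta_1$ and the non-vanishing of its coefficients follow by a short direct calculation, and the remaining $\beta_i$ are produced by any expansion of $\beta_1 f_1-1$ against $f_2,\ldots,f_s$.
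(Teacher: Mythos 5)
Your proof is correct in substance but takes a genuinely different route from the paper's. The paper inverts $f_1$ by expanding the geometric series $\tfrac{1}{-m+t}=-\tfrac{1}{m}\sum_{i\ge 0}(t/m)^i$ with $t=\sum_i y_i$, reducing the partial sums first modulo the relations $y_i^2-y_i$ and then modulo the rest of $\langle f_2,\dots,f_s\rangle$, and arguing that (i) the sums stabilize because $R$ is Artinian and (ii) no monomial $y_{\mathbf{b}}$, $\mathbf{b}\in B$, can drop out because every contribution to its coefficient is negative (the role of Lemma \ref{lem:noposrel}). You instead identify the subalgebra $S$ generated by the indicator variables with $\C^B$ via evaluation at closed points together with M\"obius inversion over the downward-closed set $B$, note that subset-closure plus infeasibility force $|\mathbf{c}|<m$ throughout $B$ so that $f_1$ becomes the nowhere-vanishing function $\mathbf{c}\mapsto|\mathbf{c}|-m$, and re-expand its pointwise inverse in the monomial basis. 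Your route is more structural and buys an explicit closed form for the coefficients, together with a cleaner justification that none of them vanish than the paper's ``ignore the mixed-sign relations'' step; the paper's route is shorter and never needs the linear independence of $\{y_{\mathbf{b}}\}_{\mathbf{b}\in B}$, which is the one genuinely new lemma your argument requires (and which your evaluation/M\"obius argument does establish correctly). One small correction: the partial-fraction identity you invoke carries an extra factor $(-1)^n$, and the correct value is $a_{\mathbf{b}}=-|\mathbf{b}|!\,/\prod_{k=0}^{|\mathbf{b}|}(m-k)$, so every coefficient is strictly negative (consistent with the sign the paper's expansion produces); this sign slip does not affect your non-vanishing conclusion, since $|\mathbf{b}|<m$ still makes every factor in the denominator a positive integer.
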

\begin{proof}
 We look at the Nullstellensatz certificate $\beta_1,\dots,\beta_n$ where $\beta_1f_1=1$ in $R$ so we can express $\beta_1f_1-1=\sum_{i=2}^s{\beta_if_i}$ in $A$. We now analyze what $\beta_1$ must look like. First of all we note that over $\C$, there is a power series expansion
 $$\frac{1}{-m+t}=-\frac{1}{m}\sum_{i\ge 0}{\bigg(\frac{t}{m}\bigg)^i}$$ viewed as a function in $t$. Replacing $t$ with $\sum_{i=1}^n{y_i}$, we get a power series in the indicator variables that includes every monomial in these variables with a negative coefficient.

 We consider the partial sums
 $$-\frac{1}{m}\sum_{i=0}^k{\bigg(\frac{t}{m}\bigg)^i},\qquad t=\sum_{i=1}^n{y_i}$$ first taken modulo the ideal generated by the polynomials of the form $y_i^2-y_i$. This gives a sum where the monomials are $y_{\textbf{d}}$ for $\textbf{d}\in\{0,1\}^n$ whose coefficients are all negative real numbers. We know from Lemma \ref{lem:onlygoodmons}, that the monomials in $\beta_1$ can be taken of the form $y_{\textbf{b}}$ for $\textbf{b}\in B$; the other monomials can be expressed in terms of $f_2,\dots,f_s$. So then those monomials that are equal to zero modulo $f_2,\dots,f_s$ can be removed, giving us another sum. If there is a relation of the form $\sum_{i}{a_i y_{\textbf{d}_i}}=\sum_{j}{c_j y_{\textbf{d}'_j}}$, we ignore it. Such relations simply mean that there is a non-unique way to represent this sum in $R$. Lastly, these partial sums converge to the inverse $\beta_1$ of $f_1$ in $R$ which is supported on the monomials of the form $y_{\textbf{b}}$ for $\textbf{b}\in B$, using the assumption that $R$ is 
Artinian.
\end{proof}

Theorem \ref{thm:enumnull} guarantees the existence of a Nullstellensatz certificate such that every possible combinatorial structure satisfying the constraints of $f_2,\dots,f_s$ is encoded in a monomial in the indicator variables appearing with non-zero coefficient in $\beta_1$. This is precisely the Nullstellensatz certificate found in Theorem \ref{thm:indsetcert} since any subset of an independent set is again an independent set. 

As it so happens, in Theorem \ref{thm:indsetcert} this Nullstellensatz certificate is also of minimal degree. However, it is not necessarily the case that the certificate given in Theorem \ref{thm:enumnull} is minimal. 

The most obvious way for minimality to fail is by reducing by the linear relations among the monomials $y_{\textbf{b}}$ for $\textbf{b}\in B$ with both negative and positive coefficients. However, if all such linear relations are homogeneous polynomials, we show these relations cannot reduce the degree of the Nullstellensatz certificate. 

\begin{defn}
 We say that the ideal $f_2,\dots,f_s$ has only \emph{homogeneous linear relations among the indicator variables} if every equation is of the form $\sum_{i=1}^\ell{a_iy_{\textbf{d}_i}}=0$ for $a_i\in\mathbb{R}$ and $\textbf{d}_i\in \{0,1\}^n$ and has the property that not all $a_i$ are positive or all negative and that $\deg(y_{\textbf{d}_i})=\deg(y_{\textbf{d}_j})$ for all $i,j\in[\ell]$.
\end{defn}
\begin{lemma}\label{lem:homlin}
 Let $\beta_1,\dots, \beta_s$ be a minimal Nullstellensatz certificate for the system $f_1=0,\dots, f_s=0$, and suppose that $\beta_1$ only has positive real coefficients. Then after adding homogeneous relations in the indicator variables to the system, there is a minimal degree Nullstellensatz certificate of the form $\beta_1,\beta'_2,\dots,\beta'_s$.
\end{lemma}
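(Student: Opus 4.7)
The plan is to show that extending the system by the homogeneous relations $g_1, \ldots, g_t$ does not decrease the minimum Nullstellensatz degree. Granted this, the original certificate $(\beta_1, \ldots, \beta_s)$, extended by zero coefficients on the new relations, already serves as a minimum-degree certificate for the extended system, proving the lemma with $\beta_i' = \beta_i$ (though other choices are of course also allowed).

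First, I observe that since each $g_j$ is a relation valid in the quotient $R = A/\langle f_2, \ldots, f_s\rangle$, we have $g_j \in \langle f_2, \ldots, f_s\rangle$, so the ideal is unchanged by adjoining the $g_j$'s. In particular, the original certificate of degree $d := \deg \beta_1$ remains a certificate for the extended system, establishing the upper bound $\leq d$ on the extended minimum certificate degree.

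For the matching lower bound I would argue by contradiction. Suppose $(\gamma_1, \ldots, \gamma_s, \delta_1, \ldots, \delta_t)$ is a certificate for the extended system of total degree $d' < d$. Reducing modulo $\langle f_2, \ldots, f_s\rangle$ kills the $\delta_j g_j$ terms and forces $\gamma_1 \equiv \beta_1$ in $R$. Next, filter $A$ by degree in the indicator variables $y_i$ and examine the top graded piece $\beta_1^{[d]}$. By the positivity hypothesis (combined with the fact from Lemma \ref{lem:onlygoodmons} that only indicator monomials $y_{\textbf{b}}$ for $\textbf{b} \in B$ survive in $R$), $\beta_1^{[d]}$ is a strictly positive combination of distinct degree-$d$ indicator monomials, hence nonzero in $R$ by Lemma \ref{lem:noposrel}. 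But $\deg \gamma_1 < d$ forces $\gamma_1^{[d]} = 0$, so the positive-coefficient polynomial $\beta_1^{[d]}$ would have to be canceled modulo lower-$y$-degree contributions from the $f_i$'s and the $y$-homogeneous $g_j$'s. Such a cancellation would yield an all-positive linear relation among degree-$d$ indicator monomials in $R$, contradicting Lemma \ref{lem:noposrel}.

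The main obstacle is making this last cancellation argument precise, because the $f_i$ for $i \geq 2$ are generally not $y$-homogeneous (e.g., $y_i^2 - y_i$ has components in $y$-degrees $1$ and $2$), so $\langle f_2, \ldots, f_s\rangle$ is not $y$-graded and $y$-top-degree contributions arising from products $q_i f_i$ can interfere with lower-degree corrections. The crucial leverage is that each $g_j$ contributes only in a single $y$-degree by homogeneity, so after tracking carefully how mixed-degree pieces of the $f_i$'s combine, the ``degree-$d$ slice'' of the certificate equation reduces in $R$ to a relation among positively weighted indicator monomials of $y$-degree $d$; Lemma \ref{lem:noposrel} then rules this out and delivers the contradiction.
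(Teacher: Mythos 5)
Your strategy aims at a stronger claim than the lemma makes, and that stronger claim is exactly what the paper's own proof disavows. You try to show that adjoining the homogeneous relations $g_1,\dots,g_t$ cannot lower the minimum certificate degree, so that $(\beta_1,\dots,\beta_s,0,\dots,0)$ is still minimal. But a Nullstellensatz certificate depends on the chosen generators, not on the ideal: although each $g_j$ lies in $\langle f_2,\dots,f_s\rangle$, writing $g_j=\sum_i h_{ji}f_i$ may force the $h_{ji}$ to have large degree, so a low-degree coefficient on $g_j$ in the extended system need not translate into a low-degree certificate for the original system. Adjoining generators can therefore genuinely lower the minimal degree, and the paper's proof explicitly allows this (``we might be able to reduce the degree of the other $\beta_i$\dots such that the degree of this new certificate is less than the degree of the original''). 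That is precisely why the conclusion reads $\beta_1,\beta'_2,\dots,\beta'_s$ rather than $\beta_1,\dots,\beta_s$: only the first coefficient is claimed to survive. Your lower bound, if it held, would contradict this. Moreover, even granting your cancellation step, your argument only bounds $\deg\gamma_1$ from below; the certificate degree is $\max$ over all coefficients, and you silently identify the original certificate's degree with $d=\deg\beta_1$, which is not given (the max may be attained by some $\beta_j$, $j\ge 2$ --- that situation is the hypothesis of Lemma \ref{lem:maxbetaone}, not of this lemma). So a bound $\deg\gamma_1\ge\deg\beta_1$ does not rule out an extended certificate of total degree below $\max_i\deg\beta_i$.

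Separately, the one step that does matter --- that a strictly positive combination of degree-$d$ square-free indicator monomials cannot be traded, via the added relations, for something of lower degree --- is the step you label ``the main obstacle'' and leave unresolved. This is the entire content of the paper's proof: because each added relation is degree-homogeneous in the $y_{\textbf{b}}$ and has coefficients of both signs, applying it to the all-positive top-degree part of $\beta_1$ can only exchange some degree-$d$ monomials for others of the same degree, never eliminate them all; hence $\deg\beta_1$ cannot be reduced by these relations, and any savings must come from replacing $\beta_2,\dots,\beta_s$ by $\beta'_2,\dots,\beta'_s$. A correct write-up should fix $\beta_1$ and make exactly this positivity-plus-homogeneity argument (Lemma \ref{lem:noposrel} is the right tool), rather than attempt to freeze the entire certificate.
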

\begin{proof}
By adding homogeneous relations in the indicator variables, we claim it is impossible to reduce the degree of $\beta_1$ if it only has positive real coefficients. Let us try to remove the monomials of highest degree in $\beta_1$ by adding homogeneous linear relations in the indicator variables. We apply the first linear homogeneous relation to see which monomials we can remove. The relations are of the form $\sum_{i=1}^{k}{a_iy_{\textbf{b}_i}}=\sum_{j=k+1}^{\ell}{a_jy_{\textbf{b}_j}}$ where the $y_{\textbf{b}_k}$ are all monomials of a given degree $d$ and all $a_k\in \R_{\ge0}$. Thus we can potentially remove some of the monomials of degree $d$ in $\beta_1$ using such relations, but not all of them. This is true not matter how many linear homogeneous relations we apply; there will always be some monomials of highest degree remaining. However, we might be able to reduce the degree of the other $\beta_i$, $i\ge 2$ to get a Nullstellensatz certificate $\beta_1,\beta'_2,\dots,\beta'_s$ such that the degree of 
this new certificate is less than the degree of the original.
\end{proof}

Given the Nullstellensatz certificate $\beta_1,\dots,\beta_s$ guaranteed by Theorem \ref{thm:enumnull}, we note that $f_1\beta_1-1$ must consist entirely of monomials of the form $y_{\textbf{d}}$ for $\textbf{d}\notin B$. If $D=\max\{\deg(y_{\textbf{b}})|\;\textbf{b}\in B\}$, then $f_1\beta_1-1$ must be of degree $D+1$ as $f_1$ is linear. Let $M$ denote the set of monomials (in $A$) of $f_1\beta_1-1$.

We consider the following hypothetical situation where the Nullstellensatz certificate guaranteed by Theorem \ref{thm:enumnull} is not of minimal degree. Given a monomial $\mu\in M$, we have that $\mu=\sum_{i=2}^s{\mu_if_i}$ for some polynomials $\mu_i$, although this is not unique. We define $$\deg_R(\mu):=\max\{\deg(\mu_i),\tn{over all equalities } \mu=\sum_{i=2}^s{\mu_if_i}\}.$$ Then the degree of $\beta_1,\dots,\beta_s$ is $\max\{D, \deg_R(\mu)\tn{for } \mu\in M\}$. 

Suppose that the degree of the certificate is $\ge D+2$ and that for every $\max\{\deg_R(y_i\cdot\mu)|\mu\in M\}<\max\{\deg_R(\mu)|\mu\in M\}$ for every $i\in[n]$. Then define $\beta'_1:=\beta_1+m^{-1}(f_1\beta_1-1)$, which has degree $D+1$. 

Now we note that $$f_1\beta'_1-1=f_1\beta_1+m^{-1}f_1(f_1\beta_1-1)-1$$ $$=f_1\beta_1-f_1\beta_1+1+m^{-1}\sum_{i=1}^n{y_i(f_1\beta_1-1)}-1$$ $$=m^{-1}\sum_{i=1}^n{y_i(f_1\beta_1-1)}.$$ However, since every monomial in $m^{-1}\sum_{i=2}^n{y_i(f_1\beta_1-1)}$ is of the form $y_i\cdot\mu$ for some $\mu\in M$, we can express this polynomial as $\sum_{i=1}^s{\beta'_sf_s}$ where $\deg(\beta'_s)<\deg(\beta_s)$ since $\max\{\deg_R(y_i\cdot\mu)|\mu\in M\}<\max\{\deg_R(\mu)|\mu\in M\}$ for every $i\in[n]$. So we have found a Nullstellensatz certificate with smaller degree.

In the hypothetical situation above, we were able to drop the degree of the Nullstellensatz certificate by increasing the degree of $\beta_1$ by one. However, this construction can be iterated and it may be that the degree of $\beta_1$ must be increased several times before the minimal degree certificate is found. This depends on how high the degrees of $\beta_2,\dots,\beta_s$ are. It may also be the case that adding lower degree monomials also lowers the degree of the certificate.

\begin{lemma}\label{lem:maxbetaone}
 If $\beta_1,\dots,\beta_s$ be the Nullstellensatz certificate guaranteed by Theorem \ref{thm:enumnull} and $f_2,\dots,f_s$ have only linear homogeneous relations in the indicator variables. If $\max\{\deg(\beta_i),i\in[s]\}=\deg(\beta_i)$, then this is a Nullstellensatz certificate of minimal degree. 
\end{lemma}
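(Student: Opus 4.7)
The plan is to argue by contradiction. Suppose the certificate is not of minimal degree; then there exists another certificate $(\gamma_1, \ldots, \gamma_s)$ with $\max_i \deg(\gamma_i) < D := \deg(\beta_1)$, and in particular $\deg(\gamma_1) < D$. The goal is to contradict this by showing that every polynomial representative of $f_1^{-1}$ already has degree at least $D$.

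First I would put a useful grading on the indicator-variable picture. Set $J := \langle f_2, \ldots, f_s\rangle \cap \C[y_1, \ldots, y_n]$. Reducing modulo $y_i^2 - y_i$ and using Lemma \ref{lem:onlygoodmons}, every class in $\C[y]/J$ admits a squarefree representative supported on $\{y_{\textbf{b}} : \textbf{b} \in B\}$; and since, by hypothesis, the only further relations among these classes are homogeneous and linear, the squarefree picture decomposes as a graded $\C$-vector space $\bigoplus_d W_d$, where $W_d$ is spanned by the classes of the $y_{\textbf{b}}$ with $|\textbf{b}| = d$. In this grading the top-degree component of $f_1^{-1}$ is the image of the degree-$D$ part of $\beta_1$, which by the power series construction in the proof of Theorem \ref{thm:enumnull} is a nontrivial linear combination of basis monomials all with negative coefficients. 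Lemma \ref{lem:noposrel} then forces this element to be nonzero in $W_D$, so every representative of $f_1^{-1}$ in $\C[y]/J$ must have degree at least $D$.

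To conclude, I would transfer this lower bound from $\C[y]$-representatives to arbitrary $A$-representatives. The natural route is to split $\gamma_1 = \gamma_1^y + \gamma_1^x$ into the ``purely $y$'' and ``contains an $x_j$'' parts, and match the purely-$y$ pieces on both sides of $\gamma_1 f_1 - 1 \in \langle f_2, \ldots, f_s\rangle$ to obtain $\gamma_1^y f_1 - 1 \in J$. Since $\deg(\gamma_1^y) \leq \deg(\gamma_1) < D$, this would contradict the conclusion of the previous paragraph and finish the proof.

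The main obstacle I anticipate is precisely this last step: in general the purely-$y$ part of a polynomial in $\langle f_2, \ldots, f_s\rangle$ need not itself lie in the elimination ideal $J$, so the reduction must exploit in an essential way the specific structure of the generators $f_2, \ldots, f_s$ --- the presence of the $y_i^2 - y_i$ together with the hypothesis that all purely-$y$ relations are homogeneous and linear --- in order to push any $x$-dependence out of the collapsed identity $\gamma_1^y f_1 - 1$ without raising its degree. This is the point at which the homogeneous-linear-relations hypothesis is doing genuine work; by contrast, the grading argument in the second paragraph would fail as soon as an inhomogeneous purely-$y$ relation (beyond the $y_i^2 - y_i$ needed for squarefree reduction) were allowed, since then the degree-$D$ part of $\beta_1$ could in principle be witnessed by a strictly lower-degree element of $\C[y]/J$.
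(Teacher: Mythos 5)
Your first two paragraphs are essentially the paper's own argument in different clothing: the paper proves this lemma by observing that any competing certificate $\beta'_1,\dots,\beta'_s$ must have $\beta'_1\equiv\beta_1$ in $R$, that by Lemma \ref{lem:onlygoodmons} the support of $\beta_1$ is all of $\{y_{\textbf{b}}:\textbf{b}\in B\}$, and that by Lemma \ref{lem:homlin} (your Lemma \ref{lem:noposrel} step, recast in graded form) the homogeneous linear relations cannot delete the top-degree monomials because they all carry coefficients of the same sign; hence $\deg(\beta'_1)\ge\deg(\beta_1)$, and since by hypothesis $\deg(\beta_1)=\max_i\deg(\beta_i)$, the given certificate is minimal. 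Your decomposition $\bigoplus_d W_d$ is a clean formalization of exactly this.

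The obstacle you flag in your third paragraph is, however, a genuine gap, and your proposal does not close it: you establish the degree lower bound only for representatives of $f_1^{-1}$ lying in $\C[y_1,\dots,y_n]/J$, whereas a competing $\gamma_1$ lives in $A$, and its reduction to a squarefree, $B$-supported normal form in $R$ need not preserve degree (equivalently, as you say, the purely-$y$ part of an element of $\langle f_2,\dots,f_s\rangle$ need not lie in the elimination ideal $J$). What is actually needed is that the degree-$D$ homogeneous component of $\beta_1-\gamma_1$ --- which, if $\deg(\gamma_1)<D$, is precisely the top part of $\beta_1$, a same-signed combination of the $y_{\textbf{b}}$ --- cannot occur as the leading form of an element of $\langle f_2,\dots,f_s\rangle$; this is where the hypothesis must do its work, and it is not automatic. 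You should be aware that the paper's proof silently makes the very assumption you are worried about: it declares ``without loss of generality'' that the monomials of $\beta_1$ form a subset of those of $\beta'_1$, which presupposes that passing to the normal form does not raise degree. So your write-up is no less rigorous than the paper's on the decisive point, but as it stands it is a proof sketch, incomplete until that transfer step is supplied.
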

\begin{proof}
 For any Nullstellensatz certificate $\beta'_1,\dots,\beta'_s$, we may assume without loss of generality that the monomials of $\beta_1$ form a subset of those in $\beta'_1$. Indeed we may use Lemma \ref{lem:onlygoodmons} to say that all monomials $y_{\textbf{b}}$ for $\textbf{b}\in B$ must appear in $\beta'_1$ unless there are linear homogeneous relations in the indicator variables. By Lemma \ref{lem:homlin}, reducing $\beta'_1$ alone by these relations will not reduce the degree of $\beta'_1,\dots,\beta'_s$. However, this implies that $\deg(\beta'_1)\ge\deg(\beta_1$ and thus that the degree of the Nullstellensatz certificate $\beta'_1,\dots,\beta'_s$ has degree $\ge\deg(\beta_1)$, which is the degree of the certificate $\beta_1,\dots,\beta_s$ by assumption. So $\beta_1,\dots,\beta_s$ is a Nullstellensatz certificate is of minimal degree.
\end{proof}

Lemma \ref{lem:maxbetaone} tells us that the Nullstellensatz certificate given in Theorem \ref{thm:enumnull} is na\"ively more likely to be of minimal degree when the degrees of $f_2,\dots,f_s$ are high with respect to the number of variables, implying that the degrees of $\beta_2,\dots,\beta_s$ are low.

\begin{proposition}\label{prop:lowdegredux}
 Let $f_1,\dots,f_s$ have only homogeneous linear relations in the indicator variables and $\beta_1,\dots,\beta_s$  be a Nullstellensatz certificate. Let $\beta_1$ be supported on the monomials $y_{\textbf{b}_1},\dots,y_{\textbf{b}_\ell}$ for $\textbf{b}_1,\dots,\textbf{b}_\ell\in B$. Then if for all $j\in[n]$ and $k\in[\ell]$, $y_jy_{\textbf{b}_k}=\sum_{i=2}^s{\alpha_{i}f_i}$ satisfies $\deg(\alpha_{i})\le\deg(y_{\textbf{b}_k})$ for all $i=2,\dots,s$, $\beta_1,\dots,\beta_s$ is a minimum degree Nullstellensatz certificate. 
 
 In addition, if $f_2=0,\dots,f_s=0$ is a polynomial system entirely in the indicator variables and there are only homogeneous linear relations, then there is a Nullstellensatz certificate of minimal degree of the form $\beta_1,\beta'_2,\dots,\beta'_s$, where $\beta_1$ is the coefficient polynomial guaranteed by Theorem \ref{thm:enumnull}.
\end{proposition}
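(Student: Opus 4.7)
The plan is to reduce both assertions to Lemma~\ref{lem:maxbetaone}, which guarantees minimality once the maximum degree of the certificate $\beta_1,\dots,\beta_s$ is attained by $\beta_1$. Throughout, write $D:=\deg(\beta_1)$.

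For the first assertion, I would first expand
\[
f_1\beta_1 - 1 \;=\; -m\beta_1 - 1 + \sum_{j=1}^n y_j\beta_1.
\]
The constant $-1$ pairs with the empty-product monomial $y_{(0,\dots,0)}$, which lies in $B$ by subset-closedness, so $-m\beta_1 - 1$ is supported entirely on $B$-monomials. The remaining contributions $\sum_j y_j\beta_1$ split into $B$-monomials (which cancel against those of $-m\beta_1-1$ in $R$) and non-$B$ products $y_j y_{\mathbf{b}_k}\notin B$. By Lemma~\ref{lem:onlygoodmons} each such non-$B$ product lies in $\langle f_2,\dots,f_s\rangle$, and the hypothesis furnishes an explicit expression $y_j y_{\mathbf{b}_k} = \sum_{i=2}^s \alpha_i^{(j,k)}f_i$ with $\deg(\alpha_i^{(j,k)})\le \deg(y_{\mathbf{b}_k})\le D$. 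Aggregating these expressions, weighted by the coefficient of $y_{\mathbf{b}_k}$ in $\beta_1$, yields polynomials $\beta_2,\dots,\beta_s$ each of degree at most $D$. Hence $\max_i\deg(\beta_i)=\deg(\beta_1)$ and Lemma~\ref{lem:maxbetaone} delivers minimality.

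For the second assertion, I would verify the hypothesis of the first assertion in the special case that $f_2,\dots,f_s$ involve only the indicator variables. When $y_j$ already appears in $y_{\mathbf{b}_k}$, the identity $y_j y_{\mathbf{b}_k} - y_{\mathbf{b}_k} = (y_{\mathbf{b}_k}/y_j)(y_j^2 - y_j)$ shows that $y_j y_{\mathbf{b}_k}\equiv y_{\mathbf{b}_k}$ in $R$, contributing only a $B$-monomial with cofactor of degree $\deg(y_{\mathbf{b}_k}) - 1$. When $y_j\notin\mathbf{b}_k$ and $y_j y_{\mathbf{b}_k}\notin B$, the resulting square-free monomial $y_{\mathbf{d}}$ lies in the ideal by Lemma~\ref{lem:onlygoodmons}, and the key step is to show it admits an expression $y_{\mathbf{d}} = \sum_{i=2}^s \alpha_i f_i$ with $\deg(\alpha_i)\le \deg(y_{\mathbf{d}})-1$. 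The plan is to prove this by induction on $|\mathbf{d}|$: the homogeneous-linear hypothesis forces the ideal generators beyond $y_i^2 - y_i$ to relate monomials of a single common degree, so at the base case a scalar cofactor on an equi-degree generator suffices, and the inductive step appends factors $y_j$ while using $y_i^2 - y_i$ to absorb any degree inflation. Once this bound is in hand, the hypothesis of the first assertion is met, and the existence of a minimal certificate of the form $\beta_1,\beta'_2,\dots,\beta'_s$ follows.

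The chief obstacle is precisely this inductive degree bound for the second assertion: a non-$B$ square-free indicator monomial must be expressed in the ideal with cofactors of degree strictly smaller than itself. Carrying this through requires a combinatorial analysis of the homogeneous-linear syzygies, likely via a minimal-forbidden-subset argument in which one locates the smallest $\mathbf{d}'\le\mathbf{d}$ with $y_{\mathbf{d}'}\notin B$, handles it as the base case where the degree bound is trivial, and extends by multiplication using $y_j^2 - y_j$ to keep the cofactor degree at most $\deg(y_{\mathbf{d}})-1$. Once the base case and inductive step are settled, the reduction to the first assertion via Lemma~\ref{lem:maxbetaone} is essentially immediate.
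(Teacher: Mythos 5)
Your argument for the first assertion is essentially the paper's: show that every monomial of $f_1\beta_1-1$ is of the form $y_jy_{\mathbf{b}_k}$, use the hypothesis to rewrite each such monomial with cofactors of degree at most $\deg(y_{\mathbf{b}_k})\le\deg(\beta_1)$, conclude $\deg(\beta_i)\le\deg(\beta_1)$ for $i\ge 2$, and invoke Lemma \ref{lem:maxbetaone}. That part is fine.

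Your treatment of the second assertion has a genuine gap, and the route you propose cannot be repaired. You try to reduce the second assertion to the first by proving that every non-$B$ square-free indicator monomial $y_{\mathbf{d}}$ with $\mathbf{d}=\mathbf{b}_k+e_j$ can be written as $\sum_{i\ge 2}\alpha_i f_i$ with $\deg(\alpha_i)\le\deg(y_{\mathbf{d}})-1$; you acknowledge you have not proved this and only sketch an induction. But this bound is false in general under the hypotheses of the second assertion. Take the regular spanning subgraph system on $K_3$ (Part 1 of Theorem \ref{thm:examples2}), which is entirely in indicator variables with only homogeneous linear relations. Here $B=\{(0,0,0),(1,1,1)\}$, so $y_{12}=y_{12}\cdot y_{\emptyset}$ is a non-$B$ monomial with $\deg(y_{\mathbf{b}_k})=0$, and the claimed bound would require $y_{12}$ to be a \emph{constant} combination of the generators. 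Killing the quadratic terms forces the coefficients of the $y_{ij}^2-y_{ij}$ to vanish, leaving $y_{12}=a(y_{13}-y_{23})+b(y_{12}-y_{23})+c(y_{12}-y_{13})$, which has no solution. So the hypothesis of the first assertion genuinely fails here even though the second assertion is supposed to apply; this is exactly why the second assertion has a strictly weaker conclusion (existence of \emph{some} minimal certificate containing the $\beta_1$ of Theorem \ref{thm:enumnull}, with no claim that its degree equals $\deg(\beta_1)$). The intended argument does not go through the first assertion at all: since all variables are indicator variables, the only freedom in choosing the first coefficient polynomial of a certificate is to modify $\beta_1$ by linear relations among indicator monomials, which by hypothesis are homogeneous with mixed signs; Lemma \ref{lem:homlin} then says such relations cannot lower $\deg(\beta_1)$, so $\beta_1$ itself occurs in a minimal-degree certificate $\beta_1,\beta_2',\dots,\beta_s'$. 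You should replace your inductive degree-bound plan with this appeal to Lemma \ref{lem:homlin}. (Relatedly, your sketch conflates the hypothesis that \emph{relations among indicator monomials in $R$} are homogeneous linear with a structural claim about the generators $f_i$ themselves; the former does not constrain the degrees of the $f_i$.)
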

\begin{proof}
  We know that $f_1\beta_1-1$ contains only monomials of the form $y_jy_{\textbf{b}_k}$ for $j\in[n]$ and $k\in[\ell]$. By assumption $y_jy_{\textbf{b}_k}=\sum_{i=2}^s{\alpha_{i}f_i}$ and satisfies $\deg(\alpha_{i})\le\deg(y_{\textbf{b}_k})$ for all $i=2,\dots,s$. This implies $\deg(\beta_1)\ge\deg(\beta_i)$ for all $i=2,\dots,s$. Then apply Lemma \ref{lem:maxbetaone}. 
  
  If we restrict our attention to a system $f_1=0,\dots,f_s=0$ only in the indicator variables, all homogeneous linear relations are in the indicator variables. Furthermore, any equation $y_jy_{\textbf{b}_k}=\sum_{i=2}^s{\alpha_{i}f_i}$ is a homogeneous linear relation in the indicator variables since these are the only variables in the equation. So these too can be ignored. We can then apply Lemma \ref{lem:homlin}.
\end{proof}

Proposition \ref{prop:lowdegredux} is a generalization of Theorem \ref{thm:indsetcert} as we can see from Proposition \ref{prop:indpolysystem} that all of the variables are indicator variables.

\subsection{Some examples with indicator variables}

We now reproduce the first theorem from \cite{Loera:2009:ECP:1552058.1552063}. This theorem establishes several polynomial systems for finding combinatorial properties of graphs and we shall see that all of them (except for one, which we have omitted from the theorem) satisfy the conditions of Theorem \ref{thm:enumnull}. Afterwards, we shall present a few new examples that also use indicator variables.

\begin{theorem}[\cite{Loera:2009:ECP:1552058.1552063}]\label{thm:examples}
$$ $$

\noindent 1. A simple graph $G=(V,E)$ with vertices numbered $1,\dots,n$ and edges numbered $1,\dots,e$ has a planar subgraph with $m$ edges if and only if the following system of equations has a solution:

\begin{center}
 \begin{tabular}{r l}
  $-m+\sum_{\{i,j\}\in E}{z_{ij}}=0$.\\
  $z_{ij}^2-z_{ij}=0$ & for all $\{i,j\}\in E$.\\
  $\prod_{s=1}^{n+e}{(x_{\{i\}k}-s)}=0$ & for $k=1,2,3$ and every $i\in [n]$.\\
  $\prod_{s=1}^{n+e}{(y_{\{ij\}k}-s)}=0$ & for $k=1,2,3$ and $\{i,j\}\in E$.\\
  $z_{\{ij\}}(y_{\{ij\}k}-x_{\{i\}k}-\Delta_{\{ij,i\}k})=0$ & for $i\in[n]$, $\{i,j\}\in E$, $k\in[3]$.\\
  $z_{\{uv\}}\prod_{k=1}^3{(y_{\{uv\}k}-x_{\{i\}k}-\Delta_{\{uv,i\}k})}=0$ & for $i\in[n]$, $\{u,v\}\in E$,~$u,v\ne i$.\\
  $z_{\{ij\}}z_{\{uv\}}\prod_{k=1}^3{(y_{\{ij\}k}-y_{\{uv\}k}-\Delta_{\{ij,uv\}k})}=0$ & for every $\{i,j\},\{u,v\}\in E$.\\
  $z_{\{ij\}}z_{\{uv\}}\prod_{k=1}^3{(y_{\{uv\}k}-y_{\{ij\}k}-\Delta_{\{uv,ij\}k})}=0$ & for every $\{i,j\},\{u,v\}\in E$.\\
  $\prod_{k=1}^3{(x_{\{i\}k}-x_{\{j\}k}-\Delta_{\{i,j\}k})}=0$ & for $i,j\in[n]$.\\
  $\prod_{k=1}^3{(x_{\{j\}k}-x_{\{i\}k}-\Delta_{\{j,i\}k})}=0$ & for $i,j\in[n]$.\\
  $\prod_{d=1}^{n+e-1}{(\Delta_{\{ij,uv\}k}-d)}=0$ & for $\{i,j\},\{u,v\}\in E$, $k\in[3]$.\\
  $\prod_{d=1}^{n+e-1}{(\Delta_{\{ij,i\}k}-d)}=0$ & for $\{i,j\}\in E$, $k\in[3]$.
 \end{tabular}
\end{center}

$$ $$
\noindent For $k=1,2,3$:
$$s_k\bigg(\prod_{\stackrel{i,j\in[n]}{i<j}}{(x_{\{i\}k}-x_{\{j\}k})}\prod_{\stackrel{i\in[n]}{\{u,v\}in E}}{(x_{\{i\}k}-y_{\{uv\}k})}\prod_{\stackrel{\{i,j\},}{\{u,v\}\in E}}{(y_{\{ij\}k}-y_{\{uv\}k})}\bigg)=1.$$

\noindent 2. A graph $G=(V,E)$ with vertices labeled $1,\dots,n$ has a $k$-colorable subgraph with $m$ edges if and only if the following systems of equations has a solution:

\begin{center}
 \begin{tabular}{r l}
  $-m+\sum_{\{i,j\}\in E}{y_{ij}}=0$\\
  $y_{ij}^2-y_{ij}=0$ & for $\{i,j\}\in E$.\\
  $x_i^k-1=0$ & for $i\in[n]$.\\
$y_{ij}(x_i^{k-1}+x_i^{k-2}x_j+\dots+x_{j}^{k-1})=0$ & for $\{i,j\}\in E$.
 \end{tabular}
\end{center}

\noindent 3. Let $G=(V,E)$ be a simple graph with maximum vertex degree $\Delta$ and vertices labeled $1,\dots,n$. Then $g$ has a subgraph with $m$ edges and edge-chromatic number $\Delta$ if and only if the following system of equations has a solution:

\begin{center}
 \begin{tabular}{r l}
  $-m+\sum_{\{i,j\}\in E}{y_{ij}}=0$\\
  $y_{ij}^2-y_{ij}=0$ & for $\{i,j\}\in E$.\\
  $y_{ij}(x_{ij}^\Delta-1)=0$ & for $\{i,j\}\in E$.
 \end{tabular}
\end{center}
$$s_i\prod_{\stackrel{j,k\in N(i)}{j<k}}{(x_{ij}-x_{ik})}=1\qquad\tn{for }i\in[n].$$
\end{theorem}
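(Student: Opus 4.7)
The plan is to verify each of the three ``iff'' claims separately by reading off how each polynomial family constrains a common solution. In every part the template is the same: the indicator variables ($z_{ij}$ in part~1, $y_{ij}$ in parts 2--3) together with $-m + \sum y_{ij} = 0$ select an edge subset $F$ of size $m$, and the remaining equations are designed to assert the desired property of $F$. I would then produce explicit maps between solutions of the system and the stated combinatorial structures and verify bijectivity in both directions.

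Part~(2) is the cleanest and I would treat it first. The equation $x_i^k - 1 = 0$ restricts $x_i$ to the $k$-th roots of unity, so $i \mapsto x_i$ is an assignment of one of $k$ colors to each vertex. The key algebraic identity
\[
  x_i^k - x_j^k = (x_i - x_j)\bigl(x_i^{k-1} + x_i^{k-2}x_j + \dots + x_j^{k-1}\bigr),
\]
combined with $x_i^k = x_j^k = 1$, shows that the second factor vanishes exactly when $x_i \ne x_j$. Hence $y_{ij}(x_i^{k-1} + \dots + x_j^{k-1}) = 0$ enforces ``if edge $\{i,j\}$ is selected, then its endpoints have distinct colors'', which is precisely properness of the coloring restricted to $F$. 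Part~(3) follows the same template: $x_{ij}^\Delta = 1$ (enforced on selected edges by $y_{ij}(x_{ij}^\Delta - 1) = 0$) assigns a $\Delta$-color to each selected edge, and the Rabinowitsch-style identity $s_i \prod_{j<k,\,j,k \in N(i)}(x_{ij} - x_{ik}) = 1$ is satisfiable if and only if the $x_{ij}$ at each vertex are pairwise distinct, i.e.\ incident edges receive distinct colors. Unselected edges' $x_{ij}$ are unconstrained apart from that distinctness, which is always realizable since $|N(i)| \le \Delta$.

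Part~(1), the planar subgraph encoding, is the hard case and the main obstacle. The idea is to encode a straight-line embedding of $(V,F)$ in the integer cube $[1,n+e]^3$: $x_{\{i\}k}$ is the $k$-th coordinate of vertex $i$, $y_{\{ij\}k}$ is the $k$-th coordinate of some point carried by edge $\{i,j\}$, and each auxiliary $\Delta_{\cdot\cdot k}$ slack represents a nonzero coordinate difference that will be aggregated into a product. One checks that the product equations of the form $z_{\{ij\}}z_{\{uv\}}\prod_k (y_{\{ij\}k} - y_{\{uv\}k} - \Delta_{\{ij,uv\}k}) = 0$ (and their vertex and vertex/edge variants), together with the $\Delta$-range equations, encode pairwise non-coincidence in at least one coordinate, and that the final Rabinowitsch equations $s_k(\cdots) = 1$ force all of the relevant coordinate-difference products to be nonzero. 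The forward direction uses F{\'a}ry's theorem and the explicit integer-grid realizations of de Fraysseix--Pach--Pollack (or Schnyder) to place any planar $F$ on coordinates in $[1,n+e]^3$; the reverse direction reads off a non-crossing straight-line drawing from such a solution.

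The main obstacle is the bookkeeping in Part~(1): one must verify that every non-coincidence condition actually needed to preclude crossings among selected edges is captured by the listed equations, and conversely that any integer-coordinate solution projects to a legitimate planar straight-line drawing. Parts~(2) and (3) are essentially routine once the roles of the indicator, coloring, and Rabinowitsch slack variables are made explicit, and in all three cases the indicator structure is subset-closed, placing these systems squarely within the scope of Theorem~\ref{thm:enumnull}.
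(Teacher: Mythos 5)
The first thing to say is that the paper contains no proof of this statement at all: Theorem~\ref{thm:examples} is reproduced verbatim from \cite{Loera:2009:ECP:1552058.1552063} and used as a black box, so there is no in-paper argument to compare yours against. Judged on its own terms, your treatment of Parts~(2) and~(3) is correct and is the standard one: the $k$-th (resp.\ $\Delta$-th) roots of unity serve as colors, the factorization $x_i^k-x_j^k=(x_i-x_j)(x_i^{k-1}+\dots+x_j^{k-1})$ together with $x_i^k=x_j^k=1$ shows the cofactor vanishes exactly when $x_i\ne x_j$, and the Rabinowitsch slack variables $s_i$ force pairwise distinctness of the edge colors at each vertex; your observation that the unselected edges' variables can always be chosen pairwise distinct over $\C$ disposes of the only subtlety in Part~(3).

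Part~(1), however, contains a genuine gap, and it is precisely the part you deferred to ``bookkeeping.'' The system does not encode a straight-line drawing in the integer cube, and the reverse direction you propose (``read off a non-crossing straight-line drawing from such a solution'') cannot work as stated: nowhere among the equations is there any condition comparing two selected edges as geometric segments, so nothing in a solution precludes crossings of a literal drawing on the coordinates $x_{\{i\}k}$. What the equations actually encode is Schnyder's order-theoretic characterization of planarity: a graph is planar if and only if its vertex--edge incidence poset has order dimension at most~$3$. The three indices $k=1,2,3$ are three linear orders on the $n+e$ elements of $V$ together with the selected edges (the equations $s_k(\cdots)=1$ force all values in coordinate $k$ to be pairwise distinct, hence each coordinate is a total order on $[n+e]$); the equations $z_{\{ij\}}(y_{\{ij\}k}-x_{\{i\}k}-\Delta_{\{ij,i\}k})=0$ with $\Delta_{\{ij,i\}k}\in\{1,\dots,n+e-1\}$ force every selected edge to lie strictly above each of its endpoints in all three orders; and the remaining product equations force the required reversals for incomparable pairs (vertex--vertex, edge--edge, and vertex versus non-incident edge), since a product over $k$ vanishing means the corresponding strict inequality holds in at least one of the three orders. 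The correct proof of Part~(1) therefore routes through Schnyder's theorem, not through F\'ary's theorem or de Fraysseix--Pach--Pollack grid embeddings, and the verification you left open is exactly the content of the statement rather than routine bookkeeping.
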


We can also look at the a system of polynomials that ask if there is a subgraph of graph homomorphic to another given graph. This is a generalization of Part 3 of Theorem \ref{thm:examples} as $k$-colorable subgraphs can be viewed as subgraphs homomorphic to the complete graph on $k$ vertices.

\begin{proposition}\label{prop:graphhom}
 Given two simple graphs $G$ (with vertices labeled $1,\dots,n$) and $H$, there is a subgraph of $G=(V,E)$ with $m$ edges homomorphic to $H$ if and only if the following system of equations has a solution:
 
 \begin{center}
  \begin{tabular}{r l}
   $-m+\sum_{\{i,j\}\in E}{y_{ij}}=0$\\
   $y_{ij}^2-y_{ij}=0$ & for all $\{i,j\}\in E$.\\
   $\bigg(\sum_{j\in N(i)}{y_{ij}}\bigg)\prod_{v\in V(H)}{(z_{i}-x_v)}=0$ & for all $i\in [n]$.\\
   $y_{ij}\prod_{\{v,w\}\in E(H)}{(z_i+z_j-x_v-x_w)}=0$ & for all $\{i,j\}\in E$.
  \end{tabular}
 \end{center}
\end{proposition}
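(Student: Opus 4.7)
The plan is to prove the biconditional directly, with the intended semantics that $y_{ij}$ indicates inclusion of $\{i,j\}$ in the sought subgraph $G'\subseteq G$, that $z_i$ encodes the image of vertex $i$ under the homomorphism to $H$, and that $x_v$ for $v\in V(H)$ play the role of labels of the vertices of $H$. I would take the $x_v$ to be fixed distinct complex numbers chosen so that the pairwise sums $x_v+x_w$ are all distinct across unordered pairs $\{v,w\}\subseteq V(H)$, i.e.\ the $x_v$ form a Sidon set in $\C$ (for instance $x_v=2^v$).

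For the forward direction, suppose $G'=(V,E')$ is an $m$-edge subgraph of $G$ equipped with a homomorphism $\phi:G'\to H$; extend $\phi$ to all of $V$ arbitrarily on vertices isolated in $G'$. I would set $y_{ij}^*=1$ if $\{i,j\}\in E'$ and $0$ otherwise, and $z_i^*=x_{\phi(i)}$. Each of the four equation-types then vanishes on $(y^*,z^*)$ by direct inspection: the linear equation records $|E'|=m$; the quadratics $y_{ij}^2-y_{ij}$ vanish because $y_{ij}^*\in\{0,1\}$; at each $i$ the third equation vanishes because either $\sum_{j\in N(i)} y_{ij}^*=0$ (vertex isolated in $G'$) or the factor $(z_i^*-x_{\phi(i)})$ appears in the product; and at each $\{i,j\}\in E'$ the fourth equation vanishes because $\{\phi(i),\phi(j)\}\in E(H)$ by the homomorphism property, so the factor $(z_i^*+z_j^*-x_{\phi(i)}-x_{\phi(j)})$ is one of the factors in the product.

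For the converse, assume $(y^*,z^*)$ is a common zero of the system. The first two families force $y_{ij}^*\in\{0,1\}$ with exactly $m$ ones, giving a subgraph $G'=(V,E')$ with $E'=\{e:y_e^*=1\}$. For each $i$ incident to some edge of $E'$, $\sum_{j\in N(i)} y_{ij}^*\neq 0$, so the third equation forces $z_i^*\in\{x_v:v\in V(H)\}$; by distinctness of the $x_v$ there is a unique $\phi(i)\in V(H)$ with $z_i^*=x_{\phi(i)}$. Extend $\phi$ arbitrarily on vertices isolated in $G'$. For each $\{i,j\}\in E'$ the fourth equation yields $z_i^*+z_j^*=x_v+x_w$ for some $\{v,w\}\in E(H)$; by the Sidon property this forces $\{v,w\}=\{\phi(i),\phi(j)\}$, so $\phi$ restricts to a graph homomorphism $G'\to H$.

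The main (and only nontrivial) point is the interpretation of the $x_v$. Treating them literally as free polynomial variables, an arbitrary common zero need not satisfy sum-distinctness of the $x_v^*$, which would allow the fourth equation to vanish ``spuriously'' at pairs $\{\phi(i),\phi(j)\}$ that are not edges of $H$ and would break the converse (an explicit two-edge example in $H$ makes this easy to see). Fixing the $x_v$ as Sidon constants up front removes this obstacle, and the correspondence between solutions of the system and pairs (subgraph, homomorphism) becomes routine in both directions; the verification is otherwise just bookkeeping on the four equation-types.
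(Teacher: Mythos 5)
Your proof is correct and follows the same basic route as the paper's: assign the intended semantics to $y_{ij}$, $z_i$, $x_v$ and verify the four equation types in both directions. The paper's own proof, however, is only a two-sentence description of that semantics, and it silently skips the one point you rightly identify as the crux: the status of the $x_v$. Read literally as free variables of the polynomial ring, the proposition is false --- for instance with $G=K_3$, $m=3$, $H=K_2$, setting every $y_{ij}=1$ and every $z_i=x_v=0$ satisfies all four families of equations even though a triangle admits no homomorphism to $K_2$ --- and the resulting variety would not even be zero-dimensional, contrary to the finiteness claim made in the surrounding text. Your repair (fix the $x_v$ in advance as distinct constants whose pairwise sums are pairwise distinct, e.g.\ $x_v=2^v$) is exactly what is needed: distinctness makes $z_i$ determine $\phi(i)$ uniquely, and the Sidon property makes $z_i+z_j=x_v+x_w$ force $\{\phi(i),\phi(j)\}=\{v,w\}\in E(H)$, in particular excluding $\phi(i)=\phi(j)$ since $H$ is simple. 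So your argument is not merely a fleshed-out version of the paper's sketch; it supplies a hypothesis without which the statement fails, and both directions are then verified completely. The only residual loose end, which you share with the paper and which does not affect the feasibility statement itself, is that $z_i$ remains unconstrained at vertices isolated in the chosen subgraph, so the system is still not literally zero-dimensional; this matters only for the later discussion that feeds these systems into Lemma \ref{lem:artin}.
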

\begin{proof}
 The variables $y_{ij}$ are the indicator variables which designate whether or not an edge of $G$ is included in the subgraph. The third set of equations says that if at least one of the edges incident to vertex $i$ is included in the subgraph, then vertex $i$ must map to a vertex $v\in V(H)$. The last set of equations says that if the edge $\{i,j\}$ is included in the subgraph, its endpoints must be mapped to the endpoints of an edge in $H$.
\end{proof}

We see that all of these systems of equations have indicator variables and that each system has only finitely many solutions. So Lemma \ref{lem:artin} says that the rings formed by taking a quotient by the ideal generated by all equations not of the form $-m+\sum_{i=1}^n{y_i}$ gives an Artinian ring. We also see that a subset of $k$-colorable subgraph is $k$-colorable, a subset of a planar subgraph is planar, and a subgraph of a $k$-edge colorable subgraph is $k$-edge colorable. Lastly, if a subgraph $F$ of $G$ is homomorphic to $H$, so is any subgraph of $F$ by restricting the homomorphism. So all of these systems are subset closed.

\begin{corollary}
 If the first system of equations in Theorem \ref{thm:examples} is infeasible, there is a Nullstellensatz certificate $\beta_1,\dots,\beta_s$ such that the monomials in $\beta_1$ are monomials in the variables $y_i$ in bijections with the planar subgraphs. If the second system in Theorem \ref{thm:examples} is infeasible, the same holds example that the monomials are in bijection with the $k$-colorable subgraphs. If the third system in Theorem \ref{thm:examples} is infeasible, the same holds except that the monomials are in bijection with the $k$-edge colorable subgraphs. Lastly, if the system of equations in Proposition \ref{prop:graphhom} is infeasible, the same holds except that the monomials are in bijection with the subgraphs homomorphic to $H$.
\end{corollary}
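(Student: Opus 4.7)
The plan is to verify the three hypotheses of Theorem \ref{thm:enumnull} for each of the four polynomial systems and then invoke the theorem directly. The hypotheses are: (i) a designated subset of variables plays the role of indicator variables, with $y_{ij}^2 - y_{ij} = 0$ and $-m + \sum y_{ij} = 0$ appearing among the defining equations; (ii) the quotient ring $R$ obtained by killing the remaining equations is Artinian; and (iii) the system $f_2 = \cdots = f_s = 0$ is subset closed. Once these hold, Theorem \ref{thm:enumnull} delivers a Nullstellensatz certificate $\beta_1, \dots, \beta_s$ whose leading polynomial $\beta_1$ is supported on exactly those monomials $y_{\textbf{b}}$ with $\textbf{b} \in B$, and by construction $B$ is in bijection with the claimed combinatorial objects --- planar subgraphs (case 1), $k$-colorable subgraphs (case 2), $k$-edge colorable subgraphs (case 3), or $H$-homomorphic subgraphs (Proposition \ref{prop:graphhom}) --- each of size $m$.

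Hypothesis (i) is read off by inspection: the indicator variables are $z_{ij}$ in the planarity system and $y_{ij}$ in the other three, and the required defining equations appear explicitly in each system. Hypothesis (ii) follows from Lemma \ref{lem:artin}, as sketched in the paragraph preceding the corollary: every indicator variable is forced into $\{0,1\}$, so only finitely many indicator patterns arise, and for each fixed pattern the auxiliary variables (the coordinate variables $x_{\{i\}k}, y_{\{ij\}k}$ and the difference variables $\Delta$ in the planarity system; the vertex-coloring variables $x_i$ in case 2; the edge-coloring variables $x_{ij}$ in case 3; and the vertex-label variables $z_i$ in the homomorphism system) are pinned down to finitely many values by the annihilator-style equations $\prod_s(x - s) = 0$, $x^k - 1 = 0$, and $\prod_v (z_i - x_v) = 0$ appearing in each system.

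Hypothesis (iii) is the combinatorially substantive step. For condition (a), the assignment with every indicator variable equal to zero satisfies all coupling equations automatically because each such equation carries an indicator variable (or a sum of them) as a prefactor; the remaining annihilator equations on the auxiliary variables can be satisfied by any legal assignment, yielding a valid all-zero solution. For condition (b), I argue combinatorially: removing edges from a planar subgraph leaves a planar subgraph witnessed by the same embedding; removing edges from a proper $k$-vertex coloring still gives a proper coloring; removing edges from a proper $\Delta$-edge coloring still gives a proper edge coloring; and restricting a graph homomorphism $F \to H$ to any subgraph of $F$ remains a graph homomorphism. In every case the same auxiliary-variable assignment that witnessed the larger indicator pattern continues to witness every sub-pattern. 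The main obstacle is the notational bookkeeping in the planarity system, where one must verify that every coupling equation indeed carries the correct $z_{\{ij\}}$ or $z_{\{ij\}} z_{\{uv\}}$ prefactor; this is routine upon direct inspection of the equations in Theorem \ref{thm:examples}, after which the four invocations of Theorem \ref{thm:enumnull} are immediate.
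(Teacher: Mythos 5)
Your proposal is correct and follows essentially the same route as the paper: the paper's proof is exactly the observation that each system has indicator variables, defines an Artinian quotient by Lemma \ref{lem:artin}, and is subset closed (subsets of planar, $k$-colorable, $k$-edge-colorable, or $H$-homomorphic subgraphs retain the property), after which Theorem \ref{thm:enumnull} applies. One slip to fix: the set $B$ records indicator patterns solving $f_2=\cdots=f_s=0$ only, so it is in bijection with the relevant subgraphs of \emph{every} size, not ``each of size $m$'' --- indeed infeasibility means there are no such subgraphs with exactly $m$ edges, and the whole point is that $\beta_1$ enumerates all of them regardless of size.
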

\begin{proof}
 This follows directly from Theorem \ref{thm:enumnull} and Theorem \ref{thm:examples}.
\end{proof}

None of the examples in Theorem \ref{thm:examples} satisfy the conditions of Proposition \ref{prop:lowdegredux} as the indicator variables are a proper subset of the variables in the system. The following theorem gives a few examples that only involve indicator variables. While only three of the four following examples satisfies the conditions of Proposition \ref{prop:lowdegredux}, we will see that Theorem \ref{thm:enumnull} can be useful in understanding minimum degree certificates if we can analyze the equations directly.

\begin{defn}
 Given a graph $G$, we say that a subgraph $H$ \emph{cages} a vertex $v\in V(G)$ if every edge incident to $v$ in $G$ is an edge in $H$.
\end{defn}

\begin{theorem}\label{thm:examples2}
 $$ $$
 1. A graph $G=(V,E)$ with vertices labeled $1,\dots,n$ has a regular spanning subgraph with $m$ edges if and only if the following system of equations has a solution:
 
 \begin{center}
  \begin{tabular}{r l}
   $-m+\sum_{\{i,j\}\in E}{y_{ij}}=0$\\
   $y_{ij}^2-y_{ij}=0$& for all $\{i,j\}\in E$.\\
   $\sum_{j\in N(i)}{y_{ij}}=\sum_{k\in N(\ell)}{y_{k\ell}}$ & for every $i,\ell\in[n]$.
  \end{tabular}
 \end{center}
Furthermore, if the system is infeasible, there is a minimal degree Nullstellensatz certificate of the form $\beta_1,\beta'_2,\dots,\beta'_s$, where $\beta_1$ is the coefficient polynomial guaranteed in Theorem \ref{thm:enumnull}.
$$ $$
\noindent 2. A graph $G=(V,E)$ with vertices labeled $1,\dots,n$ has a $k$-regular subgraph with $m$ edges if and only if the following system of equations has a solution:
 
 \begin{center}
  \begin{tabular}{r l}
   $-m+\sum_{\{i,j\}\in E}{y_{ij}}=0$\\
   $y_{ij}^2-y_{ij}=0$& for all $\{i,j\}\in E$.\\
   $(\sum_{j\in N(i)}{y_{ij}})(\sum_{j\in N(i)}{y_{ij}}-k)=0$ & for every $i\in[n]$.
  \end{tabular}
 \end{center}
Furthermore, if the system is infeasible, if there exists an edge in a maximum $k$-regular subgraph such that for both of its endpoints, there is an edge incident to it that is in no maximum $k$-regular subgraph, then there is a minimal degree Nullstellensatz certificate of the form $\beta_1,\beta'_2,\dots,\beta'_s$, where $\beta_1$ is the coefficient polynomial guaranteed in Theorem \ref{thm:enumnull}.
$$ $$
 \noindent 3. A graph $G=(V,E)$ with vertices labeled $1,\dots,n$ has a vertex cover of size $m$ if and only if the following system of equations has a solution:
 
 \begin{center}
  \begin{tabular}{r l}
   $-(n-m)+\sum_{i\in[n]}{y_i}=0$\\
   $y_i^2-y_i=0$ & for all $i\in[n]$.\\
   $y_iy_j=0$ & for all $\{i,j\}\in E$.
  \end{tabular}
 \end{center}
Furthermore, if the system is infeasible, there is a  Nullstellensatz certificate $\beta_1,\dots,\beta_s$ of minimal degree such the monomials in $\beta_1$  are in bijection with the independent sets of $G$.

  \noindent 4. A graph $G$ with vertices labeled $1,\dots,n$ and $e$ edges has an edge cover of size $m$ if and only if the following system of equations has a solution:
  
  \begin{center}
   \begin{tabular}{r l}
    $-(e-m)+\sum_{\{i,j\}\in E}{y_{ij}}=0$\\
    $y_{ij}^2-y_{ij}=0$ & for all $\{i,j\}\in E$.\\
    $\prod_{j\in N(i)}{y_{ij}}=0$ & for all $i\in[n]$.
   \end{tabular}
  \end{center}
  Furthermore, if the system is infeasible,  there is a minimal degree Nullstellensatz certificate $\beta_1,\dots,\beta_s$ such that the monomials of $\beta_1$ correspond to the subgraphs of $G$ that cage no vertex of $G$.
 
\end{theorem}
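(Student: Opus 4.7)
The plan is to treat the four parts separately; each decomposes into a modeling check followed by an extraction of the Nullstellensatz certificate's structure from Theorem~\ref{thm:enumnull} together with the minimality lemmas. The modeling claims are direct: in Part~1 the equation $\sum_{j\in N(i)}y_{ij}=\sum_{k\in N(\ell)}y_{k\ell}$ forces all vertex-degrees in the selected subgraph to agree; in Part~2 the quadratic constraint pins each selected-degree to $\{0,k\}$, which combined with the count forces the selected subgraph on its support to be $k$-regular with $m$ edges; in Part~3 the relation $y_iy_j=0$ on edges makes $\{i:y_i=1\}$ an independent set, whose complement is a vertex cover of size $m$ when $\sum y_i=n-m$; and in Part~4 the product $\prod_{j\in N(i)}y_{ij}=0$ says at least one edge at $i$ has $y_{ij}=0$, i.e.\ lies in the cover, so the complement is an edge cover.

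Parts~3 and~4 fit Theorem~\ref{thm:enumnull} cleanly. Both complementary structures are genuinely subset closed: subsets of independent sets are independent, and if $H$ cages no vertex then neither does any $H'\subseteq H$. Thus Theorem~\ref{thm:enumnull} yields a certificate with $\beta_1$ supported on the enumerated monomials. For Part~3 the hypothesis of Proposition~\ref{prop:lowdegredux} can be checked by cases on $y_jy_I$ for $I$ independent: either $j\in I$ (a homogeneous linear relation in the indicator variables we may ignore), or $I\cup\{j\}$ is independent (so $y_jy_I$ is itself enumerative of degree $|I|+1$), or there is $\ell\in I$ adjacent to $j$, in which case $y_jy_I=\bigl(\prod_{i\in I\setminus\{\ell\}}y_i\bigr)(y_jy_\ell)$ lies in the ideal with coefficient of degree $|I|-1\le\deg(y_I)$. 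Part~4 is subtler because the relations $\prod_{j\in N(i)}y_{ij}=0$ are not linear, so Proposition~\ref{prop:lowdegredux} does not apply verbatim; the plan is to start from any minimum-degree certificate $\gamma_1,\dots,\gamma_s$ and rewrite $\gamma_1$ using Lemma~\ref{lem:onlygoodmons} into the enumerative form without inflating the remaining $\gamma_i$, by absorbing each $y_{\textbf{d}}$ (with $\textbf{d}\notin B$) into the appropriate $\prod_{j\in N(i)}y_{ij}$.

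The main obstacle is Parts~1 and~2, where subset closedness genuinely fails: removing an edge from a $d$-regular subgraph generally produces vertices of unequal degree, so Lemma~\ref{lem:onlygoodmons} is unavailable off the shelf. The plan is to work directly with the power-series representative $-m^{-1}\sum_{k\ge 0}\bigl(m^{-1}\sum y_{ij}\bigr)^k$ from the proof of Theorem~\ref{thm:enumnull}, reduce it modulo $f_2,\dots,f_s$ term by term, and track which monomials survive. For Part~1 the vertex-degree equations $\sum_{j\in N(i)}y_{ij}-\sum_{k\in N(\ell)}y_{k\ell}=0$ are homogeneous linear relations in the indicator variables, so Lemma~\ref{lem:homlin} blocks any degree reduction coming from them; a direct degree count on the reduced power series then identifies $\deg(\beta_1)$ with the edge count of a maximum regular spanning subgraph, matching the form of Proposition~\ref{prop:lowdegredux}. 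For Part~2 the constraints are quadratic, so homogeneous linear relations alone are insufficient, and this is where the hypothesis on the existence of an edge $\{u,v\}$ in a maximum $k$-regular subgraph with a ``bad'' incident edge at each of $u,v$ enters: it supplies exactly the syzygy needed to express products $y_{\textbf{b}}\cdot y_{e'}$ (with $\textbf{b}$ enumerating a maximum configuration) via the quadratic constraints with coefficients of degree $\le\deg(y_{\textbf{b}})$, matching the hypothesis of Proposition~\ref{prop:lowdegredux}.

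I expect Part~2 to be the hardest step: verifying that the ``bad edge'' hypothesis actually produces the required low-degree syzygy will need a case analysis on whether $\{u,v\}$ lies in some, all, or none of the maximum $k$-regular subgraphs, with the bad incident edges at $u$ and $v$ forcing cancellations among the constraints $\bigl(\sum_{j\in N(i)}y_{ij}\bigr)\bigl(\sum_{j\in N(i)}y_{ij}-k\bigr)$ that drop the coefficient degree below the maximum $\deg(y_{\textbf{b}})$.
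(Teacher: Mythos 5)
Your modeling arguments and your handling of Parts 3 and 4 essentially coincide with the paper's: the paper arrives at the stated systems via the substitutions $y_i=1-x_i$ and $-y_{ij}=x_{ij}-1$ applied to the natural superset-closed formulations, then quotes Theorem \ref{thm:indsetcert} for Part 3 and Proposition \ref{prop:lowdegredux} for Part 4. (Your worry that Proposition \ref{prop:lowdegredux} fails for Part 4 because $\prod_{j\in N(i)}y_{ij}$ is not linear reads the hypothesis as a condition on the generators; the intended condition is on the $\C$-linear relations among the surviving monomials $y_{\textbf{b}}$ in $R$, and for Part 4 there are none, which is exactly what the paper notes.) You are also right, and the paper is silent about this, that regular and $k$-regular subgraphs are not subset closed, so Lemma \ref{lem:onlygoodmons} and the description of the support of $\beta_1$ in Theorem \ref{thm:enumnull} do not apply verbatim to Parts 1 and 2; for Part 1 your plan of reducing the power series directly and then letting Lemma \ref{lem:homlin} block any degree reduction (the degree constraints being homogeneous linear) is a legitimate, if sketched, repair of the paper's bare citation of Proposition \ref{prop:lowdegredux}.

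The genuine gap is in Part 2, where you point the bad-edge hypothesis in the wrong direction. Expanding $(\sum_{j\in N(i)}y_{ij})(\sum_{j\in N(i)}y_{ij}-k)$ modulo $y^2=y$ gives the \emph{inhomogeneous} relation $\sum_{j\ne j'}y_{ij}y_{ij'}=(k-1)\sum_{j}y_{ij}$, linking degree-$2$ to degree-$1$ monomials; it is precisely such relations that could a priori cancel the top-degree part of $\beta_1$ and lower the certificate degree, and neither Proposition \ref{prop:lowdegredux} nor Lemma \ref{lem:maxbetaone} is available since both presuppose only homogeneous linear relations. The hypothesis about an edge $\{i,j\}$ lying in a maximum $k$-regular subgraph with incident edges $\{i,\ell_1\}$, $\{j,\ell_2\}$ lying in none is used by the paper to defeat exactly this obstruction: the only two generators involving $y_{ij}$ are the vertex constraints at $i$ and at $j$, and any element of $\langle f_2,\dots,f_s\rangle$ whose top-degree part contains the maximal monomial $y_H$ (for $H$ a maximum $k$-regular subgraph through $\{i,j\}$) must also contain top-degree monomials involving $y_{i\ell_1}$ or $y_{j\ell_2}$, which occur in no maximal monomial of $\beta_1$; hence $y_H$ cannot be cancelled and the first coefficient of \emph{any} certificate has degree at least $\deg(\beta_1)$. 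Your plan instead asks the hypothesis to supply syzygies $y_{e'}y_{\textbf{b}}=\sum\alpha_if_i$ with $\deg(\alpha_i)\le\deg(y_{\textbf{b}})$ for \emph{all} edges $e'$ and all $\textbf{b}$, which an existence statement about a single edge cannot deliver, and which in any case only bounds $\deg(\beta_2),\dots,\deg(\beta_s)$ from above rather than bounding the certificate degree from below. As written, your Part 2 argument would not close.
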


\begin{proof}
 
We first prove Part 1. First we show that a solution to the system imply the existence of a regular spanning subgraph of size $m$. The indicator variables correspond to edges that will either be in a subgraph satisfying the last set of equations or not. The last set of equations say that every pair of vertices must be incident to the same number of edges in the subgraph. The last equations are homogeneous linear equations and so we use Proposition \ref{prop:lowdegredux} to prove that Nullstellensatz certificate guaranteed in Theorem \ref{thm:enumnull} is a minimal degree certificate.

Now we move to Part 2. Once again, the indicator variables correspond to edges that will either be in the subgraph or not. The last set of equations say that the number of that every vertex must be incident to $k$ edges in the subgraph or 0 edges. The last equations are not homogeneous linear relations. Now suppose that there is an edge $\{i,j\}$ that is in a maximum $k$-regular subgraph and $\{i,\ell_1\}$ and $\{\ell_2,j\}$ are edges in none.

The polynomial $\beta_1$ in the certificate $\beta_1,\dots,\beta_s$ given by Theorem \ref{thm:enumnull} contains a monomial for every $k$-regular subgraph. At least one of these monomials contains the variable $y_{ij}$. There are only two linear relations in which $y_{ij}$ appears: $(\sum_{s\in N(i)}{y_{is}})(\sum_{s\in N(i)}{(y_{is}-k)})=0$ and $(\sum_{s\in N(i)}{y_{si}})(\sum_{s\in N(j)}{(y_{sj}-k)})=0$. The former equation involves the variable $y_{i\ell_1}$ and the latter the variable $y_{\ell_2j}$. But neither of these variables appear in monomials of maximal degree by assumption. Therefore monomials of maximal degree involving $y_{ij}$ cannot be gotten rid of by the polynomials $f_2,\dots,f_s$. So the total degree of $\beta_1$ cannot be reduced.
 
Now we prove Part 3. We first consider a different system modeling vertex cover:

\begin{center}
  \begin{tabular}{r l}
   $-m+\sum_{i\in[n]}{x_i}=0$\\
   $x_i^2-x_i=0$ & for all $i\in[n]$.\\
   $(x_i-1)(x_j-1)=0$ & for all $\{i,j\}\in E$.
  \end{tabular}
 \end{center}

 In this system, the indicator variables correspond to vertices that will be either in a vertex cover or not. The last set of equations say that for every edge, at least one of its endpoints must be included the in the vertex cover. However, this system is not subset closed, in fact it is the opposite. If a set is a vertex cover, so is any \emph{superset}. So for Theorem \ref{thm:enumnull} to be applicable, we make the variable change $-y_i=x_i-1$. Plugging this variable change in gives us the equations in the statement in the theorem and is now subset closed. However, it defines an isomorphic ideal. We then note that these equations model independent set on the same graph and  use Theorem \ref{thm:indsetcert}.

  Lastly, we prove Part 4. Like in Part 3, we first consider the following system:
  
  \begin{center}
   \begin{tabular}{r l}
    $-m+\sum_{\{i,j\}\in E}{x_{ij}}=0$\\
    $x_{ij}^2-x_{ij}=0$ & for all $\{i,j\}\in E$.\\
    $\prod_{j\in N(i)}{(x_{ij}-1)}=0$ & for all $i\in[n]$.
   \end{tabular}
  \end{center}
  
  In this system, the indicator variables correspond to edges that are in the edge cover or not. The last equations say that for every vertex, at least one of its incident edges must be in the edge cover. Once again, this system is the opposite of being subset closed: any superset of an edge cover is an edge cover. So once again we make a variable substitution, this time $-y_{ij}=x_{ij}-1$. Plugging in gives us the system in the statement of the theorem. We then use Proposition \ref{prop:lowdegredux}, noting there are no linear relations among the indicator variables, and note that those square free monomials that get sent to zero are those divisible by a monomial of the form $\prod_{j\in N(j)}{x_{ij}}$. If a monomial is not divisible by a monomial of such a form, it corresponds to a subgraph that cages no vertex.

\end{proof}

We see from Part 2 of Theorem \ref{thm:examples2} that whether or not a minimal degree Nullstellensatz certificate exists that enumerates all combinatorial structures satisfying the polynomial constraints is sensitive to the input data. We also from the proofs of Parts 3 and 4 how Theorem \ref{thm:enumnull} might not be applicable. However, in the case of a superset closed system, it is generally possible to change it to a subset closed system using the change of variables exhibited in the proof of Theorem \ref{thm:examples2}.

While the systems of equations for Parts 3 and 4 of Theorem \ref{thm:examples2} are not the most obvious ones, because they can be obtained from a more straightforward system by a linear change of basis, we have the following Corollary.

\begin{corollary}
$$ $$
 Part 1.  A graph $G=(V,E)$ with vertices labeled $1,\dots,n$ has a vertex cover of size $m$ if and only if the following system of equations has a solution:
 \begin{center}
  \begin{tabular}{r l}
   $-m+\sum_{i\in[n]}{x_i}=0$\\
   $x_i^2-x_i=0$ & for all $i\in[n]$.\\
   $(x_i-1)(x_j-1)=0$ & for all $\{i,j\}\in E$.
  \end{tabular}
 \end{center}
Furthermore, if the system is infeasible, the degree of a minimum degree Nullstellensatz certificate is the independence number of $G$.
$$ $$
Part 2.  A graph $G=(V,E)$ with vertices labeled $1,\dots,n$ and $e$ edges has an edge cover of size $m$ if and only if the following system of equations has a solution:
  
  \begin{center}
   \begin{tabular}{r l}
    $-m+\sum_{\{i,j\}\in E}{x_{ij}}=0$\\
    $x_{ij}^2-x_{ij}=0$ & for all $\{i,j\}\in E$.\\
    $\prod_{j\in N(i)}{(x_{ij}-1)}=0$ & for all $i\in[n]$.
   \end{tabular}
  \end{center}
 Furthermore, if the system is infeasible, the degree of a minimum degree Nullstellensatz certificate is equal to the maximum number of edges a subgraph of $G$ can have such that no vertex of $G$ is caged. 
\end{corollary}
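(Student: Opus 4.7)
For Part 1, my plan is to observe that the invertible affine substitution $x_i = 1 - y_i$ converts the stated vertex cover system into the subset-closed independent set system of Proposition~\ref{prop:indpolysystem} with parameter $n-m$. A direct computation gives $-m + \sum_i x_i \mapsto (n-m) - \sum_i y_i$, $x_i^2 - x_i \mapsto y_i^2 - y_i$, and $(x_i-1)(x_j-1) \mapsto y_i y_j$, so the two generating sets correspond term by term up to a global sign. Since a vertex cover of size $m$ in $G$ is the complement of an independent set of size $n-m$, the two systems are simultaneously feasible or infeasible. The substitution is affine and invertible, hence preserves total degree exactly, so it induces a degree-preserving bijection between Nullstellensatz certificates of the two systems. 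Invoking Theorem~\ref{thm:indsetcert} on the independent set side then yields the value $\alpha(G)$ for the minimum certificate degree.

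For Part 2, I would do the analogous thing with $x_{ij} = 1 - y_{ij}$, which carries the edge cover system into the system of Part 4 of Theorem~\ref{thm:examples2}, up to a global sign $(-1)^{|N(i)|}$ on each equation $\prod_{j \in N(i)}(x_{ij} - 1)$. Theorem~\ref{thm:examples2} Part 4, which proceeds through Proposition~\ref{prop:lowdegredux}, produces a minimum degree Nullstellensatz certificate for the transformed system whose polynomial $\beta_1$ is supported precisely on the monomials $y_{\mathbf{b}}$ indexed by edge subsets $\mathbf{b}$ that cage no vertex of $G$. The degree of this certificate equals the maximum degree of such a monomial, i.e.\ the maximum number of edges in a subgraph of $G$ that cages no vertex, and this value transfers back to the original $x$-system by the same degree-preservation argument.

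The one point that requires verification beyond citing earlier results is the claim that an invertible affine change of variables preserves the minimum Nullstellensatz certificate degree for a fixed set of generators. This is essentially bookkeeping: a certificate $\sum_i \beta_i(x) f_i(x) = 1$ pulls back under $x = 1 - y$ to $\sum_i \beta_i(1-y) f_i(1-y) = 1$, and because the substitution is affine and invertible the top-degree part of any nonzero polynomial is sent to a nonzero polynomial of the same total degree, so $\deg \beta_i(1-y) = \deg \beta_i(x)$ and the inverse substitution works identically. I do not expect any real obstacle here; once this degree-preservation step is in place, both parts follow directly from Theorems~\ref{thm:indsetcert} and~\ref{thm:examples2} together with the standard complementation between vertex covers and independent sets on the one hand, and between edge covers and non-caging edge subgraphs on the other.
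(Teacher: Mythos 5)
Your proposal is correct and follows essentially the same route as the paper: the paper's proof likewise observes that the stated systems are carried to Parts 3 and 4 of Theorem~\ref{thm:examples2} by the invertible affine substitution $x = 1 - y$, notes that such a change of variables preserves the degrees of the certificate polynomials, and then reads off the minimal degree from Theorem~\ref{thm:indsetcert} and the non-caging-subgraph description respectively. Your explicit verification of the term-by-term correspondence of generators and of top-degree preservation under the affine substitution is the same bookkeeping the paper asserts more tersely.
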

\begin{proof}
 
For both parts, the correctness of the system of equations was proven in Theorem \ref{thm:examples2}. Furthermore, these systems are equivalent to the systems in Parts 3 and 4, respectively, in Theorem \ref{thm:examples2} after an invertible linear change of basis. This means that if $\beta_1,\dots,\beta_s$ is a minimum degree Nullstellensatz certificate for the systems in Theorem \ref{thm:examples2}, then applying an an invertible linear change of basis to the variables in the $\beta_i$ preserves their degrees. Thus the degrees any minimum degree Nullstellensatz certificate for the systems in the statement of the corollary must be the same.
 
\end{proof}

\section{Perfect Matchings}\label{sec3}

We now turn our attention to the problem of determining if a graph has a perfect matching via Nullstellensatz certificate methods. Unlike many of the problems considered in the previous section, this problem is not $\mathsf{NP}$-complete. Edmond's blossom algorithm determines if a graph $G=(V,E)$ has a perfect matching in time $O(|E||V|^{1/2})$. The following set of equations has been proposed for determining if a graph $G$ has a perfect matching.

\begin{equation}\label{eq:firstmatch}
 \begin{tabular}{r l}
  $\sum_{j\in N(i)}{x_{ij}}=1$& $i\in V(G)$\\
  $x_{ij}x_{jk}=0$ & $\forall i\in V(G)$, $j,k\in N(i)$
 \end{tabular}
\end{equation}
where $N(i)$ denotes the neighborhood of vertex $i$. The first equation says that a vertex must be incident to at least one edge in a perfect matching and the second equation says it can be incident to at most one edge. So indeed, these equations are infeasible if and only if $G$ does not have a perfect matching. However, there is not yet a complete understanding of the Nullstellensatz certificates if this system is infeasible \cite{marguliesberkley}.

We note that the equations $x_{ij}^2-x_{ij}$ can easily be seen to be in the ideal generated by the Equations \ref{eq:firstmatch}. Thus the variables $x_{ij}$ are indicator variables. However, there is no equation of the form $-m+\sum{x_{ij}}$, so we are not in the situation required to apply Theorem \ref{thm:enumnull}. That said, there still exists a Nullstellensatz certificate such that the non-zero monomials are precisely those corresponding to matchings in the graph $G$.

We observe that a matching on a graph $G$ corresponds precisely to an independent set of its line graph $L(G)$. In fact, there is a bijection between independent sets of $L(G)$ and matchings of $G$. This suggests a different set of equations for determining perfect matchings of $G$ that mimic those in Proposition \ref{prop:indpolysystem}.
\begin{equation}\label{eq:secondmatch}
 \begin{tabular}{r l}
   $x_{ij}^2-x_{ij}=0,$ & $\{i,j\}\in E(G),$\\
 $x_{ij}x_{ik}=0,$ &  $\{i,j\},\{i,k\}\in E(G)$,\\
 $\sum_{\{i,j\}\in E(G)}{x_{ij}}$ &$=|V(G)|/2$.
 \end{tabular}
\end{equation}

It quickly follows from Proposition \ref{prop:indpolysystem} that the solutions to this system forms a zero-dimensional variety whose solutions correspond to perfect matchings. However, we also know from Theorem \ref{thm:indsetcert} that if the system is infeasible then there is a unique minimum degree Nullstellensatz certificate whose degree is the size of a maximum matching of $G$. Furthermore, the coefficient polynomial for the equation $\sum{x_{ij}}=|V(G)|/2$ in this certificate has monomials precisely corresponding to matchings in $G$. 

Equations \ref{eq:firstmatch} and Equations \ref{eq:secondmatch} define the same variety as a set. We now want to find a way of turning a Nullstellensatz certificate for Equations \ref{eq:secondmatch} into a Nullstellensatz certificates for Equations \ref{eq:secondmatch}. This should be possible if Equations \ref{eq:firstmatch} and Equations \ref{eq:secondmatch} both define the same ideal. It is sufficient to show that both generate a radical ideal. We have the following lemma (cf. \cite{kreuzer2000computational}).

\begin{proposition}[Seidenberg's Lemma]\label{prop:seidenberg}
 Let $I\subset k[x_1,\dots,x_n]$ be a zero dimensional ideal. Suppose that for every $i\in[n]$, there is a non-zero polynomial $g_i\in I\cap k[x_i]$ such that $g_i$ has no repeated roots. Then $I$ is radical.
\end{proposition}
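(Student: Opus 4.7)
The plan is to prove that the quotient $A := k[x_1,\dots,x_n]/I$ is reduced, which is equivalent to $I$ being radical. Since $I$ is zero-dimensional, $A$ is an Artinian $k$-algebra, and an Artinian ring is reduced precisely when it decomposes as a finite product of fields (the nilradical equals the Jacobson radical, which vanishes iff every local factor is a field). So the task reduces to exhibiting $A$ as such a product.

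The key move is to work first with the sub-ideal $J := (g_1,\dots,g_n) \subseteq I$. Because $g_i \in k[x_i]$ only, there is a natural isomorphism
$$B := k[x_1,\dots,x_n]/J \;\cong\; k[x_1]/(g_1) \otimes_k \cdots \otimes_k k[x_n]/(g_n).$$
Since $g_i$ has no repeated roots, its distinct irreducible factors over $k$ are pairwise coprime and separable, and the Chinese Remainder Theorem gives $k[x_i]/(g_i) \cong \prod_{j} k[x_i]/(p_{ij})$, a finite product of separable field extensions of $k$. Thus $B$ is a tensor product of finite \'etale $k$-algebras.

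Next I would show that $B$ itself is a finite product of fields. The cleanest route is to extend scalars to $\bar{k}$: each $\bar{k}[x_i]/(g_i) \cong \bar{k}^{\deg g_i}$ since $g_i$ splits into distinct linear factors, so the tensor product becomes $\bar{k}^{N}$ with $N = \prod_i \deg g_i$, which is reduced. Faithful flatness of $k \hookrightarrow \bar{k}$ then forces $B$ to be reduced, and being Artinian and reduced, $B$ is a finite product of fields. In the setting of this paper one may skip this entirely: over $\C$ one immediately reads off $B \cong \C^{N}$.

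Finally, every ideal in a finite product of fields $K_1 \times \cdots \times K_m$ is the sum of some subset of the summands, so every quotient is again a finite product of fields. Writing $A = B/(I/J)$ therefore presents $A$ as a finite product of fields, hence reduced, and so $I = \sqrt{I}$. The only real subtlety is preservation of reducedness under tensor products over a non-perfect base field; this is precisely where the ``no repeated roots'' (equivalently, separability) hypothesis on the $g_i$ does genuine work, and the statement would fail without it.
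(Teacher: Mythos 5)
Your argument is correct. Note, however, that the paper does not actually prove this proposition: it is quoted verbatim (as ``cf.~\cite{kreuzer2000computational}'') from Kreuzer--Robbiano, where it appears as Proposition~3.7.15, so there is no in-paper proof to compare against. The standard textbook proof proceeds by induction on the number of variables: one factors $g_n$ into its pairwise coprime irreducible factors $p_j$, uses the Chinese Remainder Theorem to write $I$ as an intersection of the ideals $I+(p_j)$, and reduces to the $(n-1)$-variable case over the field $k[x_n]/(p_j)$, with separability of $g_n$ ensuring the induction hypothesis still applies over the extension field. Your route replaces the induction by a single structural observation: the sub-ideal $J=(g_1,\dots,g_n)$ already cuts out a finite \'etale $k$-algebra $B\cong\bigotimes_i k[x_i]/(g_i)$, i.e.\ a finite product of fields, and every quotient of a finite product of fields is again one, so $A=B/(I/J)$ is reduced. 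All the individual steps check out: the identification of $B$ with the tensor product, the descent of reducedness along the faithfully flat map $B\to B\otimes_k\bar k$, the characterization of reduced Artinian rings as finite products of fields, and the classification of ideals in such a product. You also correctly isolate where separability is genuinely used (reducedness of tensor products over a possibly imperfect base), and your remark that over $\C$ one can shortcut to $B\cong\C^N$ is exactly the situation the paper needs. One small observation: once the $g_i$ are given, $J$ is automatically zero-dimensional, so the hypothesis that $I$ be zero-dimensional is redundant in your argument (as it is in the original statement); this is a feature, not a bug.
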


We see that the polynomials of the form $x_{ij}^2-x_{ij}$ satisfy the conditions of Proposition \ref{prop:seidenberg} and so both ideals are indeed radical. By Theorem \ref{thm:indsetcert}, if Equations \ref{eq:secondmatch} are infeasible, we have a Nullstellensatz certificate of the form 
$$1=A\bigg(-\frac{|V(G)|}{2}+\sum_{\{i,j\}\in E(G)}{x_{ij}}\bigg)+\sum_{\stackrel{\{i,j\}\ne \{i,k\}}{\in E(G)}}{Q^i_{jk}x_{ij}x_{ik}}+\sum_{\{i,j\}\in E(G)}{P_{ij}(x_{ij}^2-x_{ij})},$$
 where $A$ is a polynomial whose monomials are in bijection with matchings of $G$ and all coefficients are positive real numbers.
If Equations \ref{eq:firstmatch} are infeasible, we denote by the polynomials $\Delta_i$ and $\Theta^i_{jk}$ a Nullstellensatz certificate such that
$$1=\sum_{i\in V(G)}{\Delta_i\bigg[(\sum_{j\in N(i)}{x_{ij}})-1\bigg]}+\sum_{\{i,j\}\ne\{i,k\}\in E(G)}{\Theta^i_{jk}x_{ij}x_{ik}}.$$

\begin{proposition}\label{prop:matchingcert}
 If Equations \ref{eq:firstmatch} are infeasible, then there is a Nullstellensatz certificate $\Delta_i$, $i\in V(G)$, and $\Theta^i_{jk}$ for $\{i,j\}\ne\{j,k\}\in E(G)$ such that 
 \begin{enumerate}[(a)]
  \item The degree of each $\Delta_i$ is the size of a maximal matching of $G$.
  \item For every matching $M$ of $G$, the monomial $\prod_{\{i,j\}\in M}{x_{ij}}$ appears with non-zero coefficient in $\Delta_i$ for all $i\in V(G)$.
  \item The degree of $\Theta^i_{jk}$ is less than or equal to the degree of $\Delta_\ell$ for all $i,j,k,\ell\in V(G)$.
 \end{enumerate}
\end{proposition}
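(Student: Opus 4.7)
The plan is to start from the Nullstellensatz certificate for Equations \ref{eq:secondmatch} supplied by Theorem \ref{thm:indsetcert} (via the standard bijection between matchings of $G$ and independent sets of $L(G)$) and rewrite it, generator by generator, into one for Equations \ref{eq:firstmatch}. Since Seidenberg's Lemma (Proposition \ref{prop:seidenberg}) ensures that both systems generate the same radical ideal, such a conversion must exist in principle; the task is to arrange it so that the enumerative coefficient polynomial $A$ is visibly inherited by each $\Delta_i$.

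Write $g_i := \sum_{j\in N(i)} x_{ij} - 1$. First I would record two algebraic identities expressing the generators of Equations \ref{eq:secondmatch} in terms of the $g_i$ and the edge-conflict monomials. Counting each edge once per endpoint gives
$$-\frac{|V(G)|}{2} + \sum_{\{i,j\}\in E(G)} x_{ij} = \frac{1}{2}\sum_{i\in V(G)} g_i,$$
and expanding $x_{ij}(g_i+g_j)$ and rearranging yields
$$x_{ij}^2 - x_{ij} = \tfrac{1}{2}x_{ij}(g_i+g_j) - \tfrac{1}{2}\!\!\sum_{k\in N(i)\setminus\{j\}}\!\! x_{ij}x_{ik} - \tfrac{1}{2}\!\!\sum_{k\in N(j)\setminus\{i\}}\!\! x_{ij}x_{jk}.$$
Substituting both identities into the certificate
$$1 = A\bigg(-\frac{|V(G)|}{2}+\sum x_{ij}\bigg) + \sum Q^i_{jk}x_{ij}x_{ik} + \sum P_{ij}(x_{ij}^2-x_{ij})$$
and collecting the coefficients of each $g_i$ and of each $x_{ij}x_{ik}$ gives
$$\Delta_i = \frac{A}{2} + \frac{1}{2}\sum_{j\in N(i)} P_{ij}\,x_{ij},\qquad \Theta^i_{jk} = Q^i_{jk} - \frac{P_{ij}+P_{ik}}{2},$$
which by construction satisfies $1 = \sum_i \Delta_i g_i + \sum \Theta^i_{jk} x_{ij}x_{ik}$.

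Next I would verify the three properties. By Theorem \ref{thm:indsetcert}, $\deg A = \alpha(L(G))$, which equals the size of a maximum matching of $G$, and $\deg P_{ij}\le \alpha(L(G))-1$; hence $\deg\Delta_i\le \alpha(L(G))$, with equality because the top-degree terms of $A/2$ cannot cancel against anything inside $\Delta_i$ (see the next paragraph). This gives (a). For (c), the same bounds together with $\deg Q^i_{jk}\le \alpha(L(G))-2$ yield $\deg\Theta^i_{jk}\le \alpha(L(G))-1 < \deg\Delta_\ell$ for every $\ell$.

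The step I expect to be the main obstacle is property (b), and even this succumbs once one invokes the positivity part of Theorem \ref{thm:indsetcert}: the coefficients of $A$ and of each $P_{ij}$ are non-negative real numbers. Consequently $\Delta_i = A/2 + \tfrac{1}{2}\sum_{j\in N(i)} P_{ij}x_{ij}$ is a non-negative combination of monomials, so no cancellation can occur, and each matching monomial $\prod_{\{i,j\}\in M} x_{ij}$ — which already has strictly positive coefficient in $A/2$ by Theorem \ref{thm:indsetcert} — retains a strictly positive coefficient in $\Delta_i$ for every $i\in V(G)$. Beyond this single appeal to positivity, the remaining work is routine verification of the substitutions and the degree comparisons.
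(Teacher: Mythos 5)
Your proposal is correct and follows essentially the same route as the paper: rewrite the constant term of Equations \ref{eq:secondmatch} as $\tfrac12\sum_i g_i$, expand each $P_{ij}(x_{ij}^2-x_{ij})$ via $x_{ij}g_i$ minus conflict monomials, collect to get $\Delta_i = \tfrac12 A + (\text{positive multiples of } P_{ij}x_{ij})$, and use the positivity and degree bounds of Theorem \ref{thm:indsetcert} for (a)--(c). The only cosmetic difference is that you symmetrize the identity over both endpoints of $\{i,j\}$ (yielding $\Theta^i_{jk}=Q^i_{jk}-\tfrac12(P_{ij}+P_{ik})$) while the paper assigns it to a single endpoint; both are valid.
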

\begin{proof}	
 
First we note that
$$\frac{1}{2}\sum_{i\in V(G)}{\bigg[(\sum_{j\in N(i)}{x_{ij}})-1\bigg]}=-\frac{|V(G)|}{2}+\sum_{\{i,j\}\in E(G)}{x_{ij}}.$$
 Then for $\{i,j\}\in E(G)$ we have that
 $$P_{ij}(x_{ij}^2-x_{ij})=P_{ij}\bigg(x_{ij}[-1+\sum_{\{i,k\}\in E(G)}{x_{ik}}]-\sum_{\stackrel{\{i,k\}\in E(G)}{j\ne k}}{x_{ij}x_{ik}}\bigg).$$
 So if $A$, $P_{ij}$ and $Q^i_{jk}$ are a Nullstellensatz certificate for Equations \ref{eq:secondmatch}, then we see that if we set $$\Delta_i:=\frac{1}{2}A+\sum_{j\in N(i)}{P_{ij}x_{ij}}\qquad\tn{ and}$$ $$\Theta^i_{jk}:=Q^i_{jk}-P_{ij}\sum_{\stackrel{\{i,k\}\in E(G)}{j\ne k}}{x_{ij}x_{ik}},$$ that we get a Nullstellensatz certificate for Equations \ref{eq:firstmatch}. Since $\deg(P_{ij})<\deg(A)$ and both have only positive real coefficients, $\deg(\Delta_i)=\deg(A)$, which is the size of a maximal matching of $G$, using Theorem \ref{thm:indsetcert}. This also implies Part (b) of the statement. Lastly, we note that since $\deg(Q^i_{jk})\le \deg(A)-2$ that $\Theta^i_{jk}$ has degree at most $\deg(A)=\deg(\Delta_i)$, again using Theorem \ref{thm:indsetcert}.
\end{proof}

While Proposition \ref{prop:matchingcert} implies the existence of an enumerative Nullstellensatz certificate similar to that in Theorem \ref{thm:indsetcert}, it is not necessarily of minimal degree. In fact many times it will not be. Consider the following result.

\begin{theorem}\label{thm:bipcert}
A loopless graph $G$ has a degree zero Nullstellensatz certificate $\beta_1,\dots,\beta_s$ for Equations \ref{eq:firstmatch} if and only if $G$ is bipartite and the two color classes are of unequal size. Furthermore, we can choose such a Nullstellensatz certificate such that for each non-zero $\beta_i$, $|\beta_i|^{-1}$ can be take to be equal to the difference in size of the independent sets.
\end{theorem}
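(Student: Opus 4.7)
The plan is to exploit the fact that a degree-zero certificate consists entirely of scalar coefficients and to match monomials of each degree in the equality $\sum_{i \in V(G)} c_i\, f_i + \sum_{j;\, i \ne k \in N(j)} d^j_{ik}\, x_{ij}x_{jk} = 1$, where $f_i := -1 + \sum_{j \in N(i)} x_{ij}$ and the scalars $c_i, d^j_{ik} \in \C$ are the constants we seek. The argument is essentially graded linear algebra.

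First I will compare coefficients degree by degree. The only degree-$2$ terms on the left-hand side come from the summands $d^j_{ik}\, x_{ij}x_{jk}$; since distinct unordered pairs of edges sharing a vertex produce pairwise distinct monomials, we must have $d^j_{ik} = 0$ for every valid $(j;i,k)$. Next, the coefficient of $x_{uv}$ in the degree-$1$ part is $c_u + c_v$, as $x_{uv}$ appears exactly once in each of $f_u$ and $f_v$, so $c_u + c_v = 0$ on every edge $\{u,v\} \in E(G)$. The constant term then yields $-\sum_i c_i = 1$.

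Next I interpret the edge condition. Requiring $c_u = -c_v$ along every edge means that $c \colon V(G) \to \C$ is a signed two-coloring: on any connected component such a $c$ is consistent and non-trivial if and only if the component is bipartite, in which case $c$ is determined up to a single scalar $\alpha$, taking values $\alpha$ on one color class and $-\alpha$ on the other. Specializing to connected bipartite $G$ with color classes $A,B$, the constraint $\sum_i c_i = -1$ collapses to $\alpha(|A|-|B|) = -1$, solvable precisely when $|A| \ne |B|$, with the unique solution $\alpha = 1/(|B|-|A|)$. This proves both implications and the furthermore clause, since $|c_i| = |\alpha| = \bigl||A|-|B|\bigr|^{-1}$ at every non-zero coordinate. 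The disconnected case reduces to analyzing each component separately in the same way.

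The analysis is mainly careful bookkeeping, so there is no serious obstacle. The one substantive point that warrants emphasis is the linear independence of the quadratic monomials $x_{ij}x_{jk}$ across distinct adjacent edge-pairs, which is what allows the $d^j_{ik}$ to be pulled off individually --- but this is immediate because each such monomial uniquely determines the unordered pair of edges it encodes.
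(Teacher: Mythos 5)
Your argument is correct for connected graphs and is essentially the paper's own proof: the converse is the same coefficient comparison (the quadratic coefficients must vanish, $c_u+c_v=0$ on every edge, and $-\sum_i c_i=1$), and your solution $\alpha = 1/(|B|-|A|)$ is exactly the explicit certificate the paper exhibits for the forward direction. The one step you should not wave away is the final sentence: the disconnected case does \emph{not} reduce componentwise to the same biconditional. On a non-bipartite component the edge relations merely force $c\equiv 0$ there, which is perfectly consistent, and the constant-term equation $\sum_i c_i=-1$ then only requires \emph{some} component to be bipartite with unequal colour classes. For instance, the disjoint union of a triangle and a path on three vertices is not bipartite, yet it admits a degree-zero certificate supported entirely on the path component. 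So for disconnected $G$ the ``only if'' direction as literally stated fails, and the theorem implicitly needs $G$ connected (or the conclusion weakened to ``some component is bipartite with unequal classes''); the paper's proof has the same lacuna, since its propagation argument ``repeating this, $G$ cannot have any odd cycles'' tacitly assumes a nonzero coefficient reaches every component. For connected $G$, your bookkeeping is complete and matches the paper's.
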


\begin{proof}
Let $G = (V,E)$ and $f_i = \sum_{j \in N(i)} x_{ij} -1$ for $i \in V$. Suppose the graph $G$ is bipartite and has two  color classes $A$ and $B$, such that $|A| > |B|$. Let $c = \frac{1}{|A| - |B|}$, then we have that
\[
\sum_{i \in A} c f_i + \sum_{j \in B} -c f_j= 1,
\]
so this gives a Nullstellensatz certificate of degree $0$ for $G$.

Conversely, suppose that $G$ has a degree zero Nullstellensatz certificate $\beta_1,...,\beta_s$. Clearly, the coefficients of the equations of the form $x_{ij}x_{jk}$ have to be zero. Now for some vertex $v_i$, let the equation $f_i $ have $\beta_i = c$, for $c \in \C$. Then for all $j \in N(i)$ we have that $\beta_j = -c$. Repeating this argument, we see that $G$ can not have any odd cycles. Furthermore, for the sum to be unequal to $0$, we need the sizes of the color classes to be unequal. 
\end{proof}

We see that as the size of the graphs being considered grows, the difference in the degree of Nullstellensatz certificate given in Proposition \ref{prop:matchingcert} and a the degree of a minimal degree certificate can grow arbitrarily large since Theorem \ref{thm:bipcert} gives an infinite family of graphs with degree zero Nullstellensatz certificates.

We analyze the time complexity of the NulLA algorithm if it is promised a connected bipartite graph with independent sets of unequal size for returning the result that the equations are infeasible. The algorithm first assumes that the polynomial equations has a Nullstellensatz certificate of degree zero, which we know from Theorem \ref{thm:bipcert} to be true in this case. Letting $f_i=\sum_{j\in N(i)}{x_{ij}}-1$ and $g^i_{jk}=x_{ij}x_{ik}$, then the algorithm will try to find constants $\alpha_i$ and $\beta^i_{jk}$ such that $\sum{\alpha_if_i}+\sum{\beta^i_{jk}x_{ij}x_{ik}}=1$. However, we immediately see that $\beta^i_{jk}=0$ for all $\{i,j\},\{i,k\}\in E(G)$. 

So we consider an augmented matrix $M|v$ with columns labeled by the constants $\alpha_i,\beta^i_{jk}$ and rows for each linear relation that will be implied among the constants, which we now determine. Each variable $x_{ij}$, $\{i,j\}\in E(G)$, appears as a linear term in exactly two polynomials: $f_i$ and $f_j$. We see that this imposes the relation $\alpha_i+\alpha_j=0$ for $\{i,j\}\in E(G)$. Because of the $-1$ appearing in each $f_i$, we also have that $\sum_{i\in [n]}{\alpha_i}=-1$. Lastly, since each $g^i_{jk}$ has no monomial in common with $g^{i'}_{j'k'}$, there are no relations among the $\beta^i_{jk}$. So we see that the number of rows of $M$ is $|E(G)|+1$.

The matrix $M$ can then be described as follows: If we restrict to the columns labeled by the $\alpha_i$, we get a copy of the incidence matrix of $G$ along with an extra row of all one's added to the bottom. The columns labeled by $\beta^i_{jk}$ are all zero columns. The augmented column $v$ has a zero in every entry except the last, which is contains a negative one.

The NulLA algorithm seeks to determine if this linear system has a solution. Since we have a matrix with $|V(G)|$ nontrivial columns and $|E(G)|+1\ge |V(G)|$ rows. This takes time $\Omega(|V(G)|^\omega)$ to run (where $\omega$ is some constant $> 2$, although conjectured to asymptotically approach 2, depending on the complexity of matrix multiplication \cite{coppersmith1982asymptotic}). However, two-coloring a graph and counting the size of the two independent sets can be done in time $O(|V(G)|+|E(G)|)=O(|V(G)|^2)$. So even in the best case scenario, the NulLA algorithm is not an optimal algorithm.

\subsection{Nullstellensatz certificates for Odd Cliques}

We now turn our attention to another question inspired by Proposition \ref{prop:matchingcert}. When is the Nullstellensatz certificate given in that theorem of minimal degree? Surprisingly, this turns out to be the case for odd cliques. This is especially unappealing from an algorithmic standpoint as any graph with an odd number of vertices clearly cannot have a perfect matching.

Throughout the rest of the is section, we take $G=K_n$, for $n$ odd. To prove our result, we will work over the ring $R=\C[x_{ij}|\;\{i,j\}\in [n], i\ne j]/I$ where $I$ is the ideal generated by the polynomials $x_{ij}^2-x_{ij}$ and $x_{ij}x_{ik}$ for $\{i,j\},\{i,k\}\in E(K_n)$. We will be doing linear algebra over this ring as it is the coefficient polynomials $\Delta_i$ that we are most interested in. Furthermore, we note that adding the equations $x_{ij}^2-x_{ij}$ does not increase the degree of the polynomials $\Delta_i$ in a certificate and it is convenient to ignore square terms.

Working over $R$, we now want to find polynomials $\Delta_i$, $i\in [n]$, such that $\sum_{i\in[n]}{\Delta_i(\sum_{j\in N(i)}{x_{ij}}-1)}=1$. Our goal is to prove that each $\Delta_i$ has degree $\lfloor n/2\rfloor$, which is the size of a maximum matching in $K_n$, for $n$ odd.

We already knew from Theorem \ref{thm:bipcert} that any Nullstellensatz certificate for $K_n$ must be of degree at least one. For the proof of the statement, it will be convenient to alter our notation. We now denote the variable $x_{ij}$ for $e=\{i,j\}\in E(K_n)$ as $x_e$. We will also write $\Delta_v$ for $v\in V(K_n)$.

\begin{theorem}
 The Nullstellensatz certificate given in Proposition \ref{prop:matchingcert} is a minimal degree certificate for $K_n$, $n$ odd.
\end{theorem}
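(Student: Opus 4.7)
The plan is to prove a lower bound matching the construction: every Nullstellensatz certificate for Equations~\ref{eq:firstmatch} applied to $K_n$ satisfies $\max_{i} \deg(\Delta_i) \geq m := (n-1)/2$. Combined with Proposition~\ref{prop:matchingcert}, which supplies a certificate of exactly this degree, this proves minimality. The technique is linear-algebra duality inside the coordinate ring $R$: exhibit a linear functional $\phi \colon R \to \C$ that annihilates every product $x_M f_i$ with $i \in [n]$ and every matching $M$ of size at most $m-1$, yet has $\phi(1) \neq 0$.

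First I would reduce the certificate equation modulo $I$. Each $\Theta^i_{jk}$ term is a multiple of $x_{ij} x_{ik}$, which is a generator of $I$, so these terms vanish in $R$. What remains is $\sum_{i \in [n]} \overline{\Delta}_i f_i = 1$ in $R$, with $\deg(\overline{\Delta}_i) \leq \deg(\Delta_i)$. A $\C$-basis of $R$ is given by the monomials $x_M := \prod_{e \in M} x_e$ indexed by matchings $M$ of $K_n$, and using $x_e^2 = x_e$ together with $x_e x_{e'} = 0$ whenever $e \cap e' \neq \emptyset$, a direct expansion yields
\begin{equation*}
x_M \cdot f_i \;=\; \begin{cases} 0, & i \in V(M), \\[2pt] -\,x_M \;+\; \displaystyle\sum_{\substack{e \ni i \\ e \cap V(M) = \emptyset}} x_{M \cup \{e\}}, & i \notin V(M). \end{cases}
\end{equation*}

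Next I would construct $\phi$ by exploiting the $S_n$-symmetry of $K_n$: look for a functional depending only on the size of a matching, $\phi(x_M) = c_{|M|}$. For $|M| = k$ and $i \notin V(M)$, exactly $n-1-2k$ edges through $i$ are disjoint from $V(M)$, so the condition $\phi(x_M f_i) = 0$ collapses to the single scalar recurrence
\begin{equation*}
(n - 1 - 2k)\, c_{k+1} \;=\; c_k, \qquad k = 0, 1, \dots, m-1.
\end{equation*}
Normalising $c_0 = 1$, this uniquely determines $c_0, c_1, \dots, c_m$; the denominators $n - 1 - 2k$ are strictly positive for every $k \leq m-1$ precisely because $n$ is odd, and one finds the closed form $c_k = (m-k)!/(2^k\,m!)$. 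In particular, $\phi$ is a well-defined linear functional on $R$ with $\phi(1) = c_0 = 1$.

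Finally, I would close the argument by duality. If every $\Delta_i$ had degree at most $m-1$, each $\overline{\Delta}_i$ would be a $\C$-linear combination of basis elements $x_M$ with $|M| \leq m-1$, so $\phi(\overline{\Delta}_i f_i) = 0$ for every $i$ by construction of $\phi$. Applying $\phi$ to $\sum_i \overline{\Delta}_i f_i = 1$ would then give $0 = \phi(1) = 1$, a contradiction. The main obstacle is spotting the right duality setup and the symmetric ansatz for $\phi$; once symmetry reduces the construction to a scalar recurrence, the key miracle is simply that the denominators $n - 1 - 2k$ remain non-zero throughout the required range, which is exactly the content of assuming $n$ odd.
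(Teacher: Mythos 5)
Your proposal is correct, and it reaches the lower bound by a route that is recognizably dual to, but packaged quite differently from, the paper's argument. The paper also works in the matching-basis of $R$ (via Lemma \ref{lem:onlygoodmons}), extracts the coefficient of each $\mathbf{x}_M$ in $\sum_v \Delta_v f_v = 1$, and then derives a contradiction by summing these coefficient relations level by level ($|M| = l+1$, then $l$, then $l-1$, \dots) using the edge-transitivity of $K_n$, descending until it reaches $\sum_v \alpha_{v,\emptyset} = 0$ against the required $\sum_v \alpha_{v,\emptyset} = -1$. You instead exhibit a single explicit annihilating functional $\phi$ with $S_n$-symmetric weights $c_k = (m-k)!/(2^k m!)$ and apply it once to the whole identity; the level-by-level descent is absorbed into the scalar recurrence $(n-1-2k)c_{k+1} = c_k$, whose solvability for all $k \le m-1$ is exactly where oddness of $n$ enters. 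What your version buys is cleanliness and robustness at the step the paper treats most casually: your identity $\phi(x_M f_i) = 0$ correctly separates the cases $i \in V(M)$ (where $x_M f_i = 0$ outright) from $i \notin V(M)$ (where the recurrence applies), and the multiplicity with which a matching $M'$ of size $k$ arises from matchings of size $k+1$ is computed exactly as $n-1-2k$; the paper's summation asserts unrestricted sums over all $v$ and a combinatorial factor of $(l-1)$ at the descent step, both of which need the kind of bookkeeping you carry out explicitly. The paper's approach, in turn, avoids having to guess the ansatz and verify well-definedness of $\phi$ on a basis of $R$, which you correctly justify by noting the $x_M$ form a $\C$-basis of the quotient. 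Both arguments conclude identically: the dual obstruction forces $\max_i \deg(\Delta_i) \ge \lfloor n/2 \rfloor$, matching the upper bound from Proposition \ref{prop:matchingcert}.
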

\begin{proof}
By Proposition \ref{prop:matchingcert} we know that there exists a Nullstellensatz certificate of degree $\lfloor n/2\rfloor$. We work in the ring $R:= \C[x_{ij}: i\neq j \in [n]] / I$, where $I$ is generated by the second set of equations in Equations \ref{eq:firstmatch}. Let $\mathcal{M}$ be the set of matchings of $K_n$, and, for $M \in \mathcal{M}$ let $\mathbf{x}_M = \prod_{e \in M} x_e$. Since we are working in $R$, by Lemma \ref{lem:onlygoodmons}, we can write
\[
\Delta_v = \sum_{M \in \mathcal{M}} \alpha_{v,M} \mathbf{x}_M. 
\]
A Nullstellensatz certificate gives us that in $R$
\[
\sum_{v \in V(K_n)} \Delta_v \left( \sum_{e \in N(v)} x_e - 1 \right)=1.
\]
 The coefficient of $\mathbf{x}_M$ is given by
\[
\sum_{v \in V(K_n)} -\alpha_{v,M} + \sum_{e \in M} \sum_{v \in e} \alpha_{v,M \setminus e},
\]
which has to equal zero in a Nullstellensatz certificate if $|M|>0$. Now, if there is a Nullstellensatz certificate of degree $l  < \lfloor n/2\rfloor$, then, if $|M| =  l+1 $, we see that
\[
\mathcal{R}_M := \sum_{e = \{u,v\} \in M}{(\alpha_{u,M\setminus{e}}+\alpha_{v,M\setminus e})} = 0,
\]
and by edge transitivity of $G$, summing over these relations implies that
\[
\sum_{v \in V(K_n)} \sum_{\stackrel{M \in \mathcal{M}}{|M|= l}} \alpha_{v,M} =0.
\]
Furthermore, we have

$$0  = \sum_{v \in V(K_n)}\sum_{\stackrel{M \in \mathcal{M}}{|M| = l}}{-\alpha_{v,M}} + \sum_{e \in M}\sum_{v \in e}{\alpha_{v,M \setminus e}} \implies$$
 $$0 = \sum_{e \in M} \sum_{v \in e}{\alpha_{v,M\setminus e}}$$
 Then summing over the linear relations in the second line above gives
 \begin{align*}
 0& = (l-1)\sum_{v \in V(K_n)} \sum_{\stackrel{M \in \mathcal{M}}{|M| = l-1}}{\alpha_{v,M}}
\end{align*}
Repeating this, we obtain that 
\[
\sum_{v \in V} \alpha_{v, \emptyset} = 0,
\]
 which contradicts the assumption that the $\Delta_v$ give a Nullstellensatz certificate as we must have
 \[
\sum_{v \in V} \alpha_{v, \emptyset} = -1.
\]
 Thus we can conclude that there is no Nullstellensatz certificate where all $\Delta_v$ have degree at most $\lfloor n/2\rfloor-1$ in $R$. But we know from Proposition \ref{prop:matchingcert} that there exists a Nullstellensatz certificate where each $\Delta_v$ has degree $\lfloor n/2\rfloor$ and all other coefficient polynomials have degree at most $\lfloor n/2\rfloor$. So this Nullstellensatz certificate is of minimal degree.
\end{proof}

So we see that using NulLA to determine if a graph has a perfect matching using Equations \ref{eq:firstmatch} can be quite problematic. Since any graph with and odd number of vertices cannot have a perfect matching, the NulLA algorithm does a lot of work: for every $i\in[\lfloor n/2\rfloor]$, it determines if a system of linear equations in $\binom{a+i}{i}$ where $a=\binom{n}{2}$ variables, which is the number of monomials in the variables $x_{ij}$ of degree $i$. However, the NulLA algorithm could be made smarter by having it reject any graph on odd vertices before doing any linear algebra. This leads us to an open question:

\begin{question}
 Is there a family of graphs, each with even size, none of which have a perfect matching, such that the Nullstellensatz certificate given in Proposition \ref{prop:matchingcert} is of minimal degree?
\end{question}

We actually implemented the NulLA algorithm to try and find examples of graphs with high degree Nullstellensatz certificates for Equations \ref{eq:firstmatch}. The only ones were graphs containing odd cliques. This leads us to wonder if there are natural "bad graphs" for the degree of the Nullstellensatz certificate and if their presence as a subgraph determines the minimal degree. Formally:

\begin{question}
 Are there finitely many families of graphs $\mathcal{G}_1,\dots\mathcal{G}_k$ such that the degree of a minimal degree Nullstellensatz certificate for Equations \ref{eq:firstmatch} of a graph $G$ is determined by the largest subgraph of $G$ contained in one of the families $\mathcal{G}_i$?
\end{question}

\section{Conclusion}

In tackling decision problems, it is often a natural idea to rephrase them in some other area of mathematics and use algorithms from said area to see if performance can be improved. The NulLA algorithm is inspired by the idea of rewriting combinatorial decision problems as systems of polynomials and then using Gr\"obner basis algorithms from computational algebraic geometry to decide this problems quickly.

Amazingly, from a theoretical point of view, the rewriting of these problems as polynomial systems is not just a change of language. Lots of combinatorial data seems to come packaged with it. Throughout this paper, we have seen time and again that simply trying to solve the decision problem in graph theory might actually involve enumerating over many subgraphs. The theory of Nullstellensatz certificates is fascinating for the amount of extra information one gets for free by simply writing these problems as polynomial systems.

From an algorithmic viewpoint, our results suggest that one should be cautious about using the NulLA algorithm as a practical tool. The NulLA algorithm always finds a minimal degree certificate, and our theorems show that such certificates may entail solving a harder problem that the one intended. However, we do not know which minimal degree Nullstellensatz certificate will get chosen: maybe there are others that are less problematic algorithmically. 

Certainly, however, work should be done to understand which minimal degree Nullstellensatz certificates will actually be found by the algorithm if there is to be any hope in actual computational gains. We have analyzed the worst case scenario, but it is unclear how often it will arise in practice. 

\subsection*{Acknowledgments} We would like to thank Jeroen Zuiddam for coding a working copy of the NulLA algorithm for our use. 	The research leading to these results has received funding from the European Research Council under the European Union's Seventh Framework Programme (FP7/2007-2013) / ERC grant agreement No 339109.

\bibliographystyle{plain}
\bibliography{bibfile}
\end{document}